\newtheorem{thm}{Theorem}
\newtheorem{exa}[thm]{Example}
\newtheorem{lem}[thm]{Lemma}
\newtheorem{cor}[thm]{Corollary}
\newtheorem{rem}[thm]{Remark}
\newcommand{\supp}{\mathop{\mathrm{supp}}}
\newcommand{\partition}{\mathop{\mathrm{Part}}}
\newcommand{\des}{\mathop{\mathrm{des}}}
\newenvironment{proofof}[1]{\par
  \pushQED{\qed}%
  \normalfont \topsep6\p@\@plus6\p@\relax
  \trivlist
  \item[\hskip\labelsep
        \bfseries
    Proof of #1\@addpunct{.}]\ignorespaces
}{%
  \popQED\endtrivlist\@endpefalse
}
\begin{document}
\title{Random shuffles on trees using extended promotion}
\author{
Svetlana Poznanovi\'c and Kara Stasikelis \\ [6pt]
Department of Mathematical Sciences\\
Clemson University, Clemson, SC 29634, USA\\[5pt]
}
\date{} 
\maketitle
\begin{abstract} 
The Tsetlin library is a very well studied model for the way an arrangement of books on a library shelf evolves over time.  One of the most interesting properties of this Markov chain is that its spectrum can be computed exactly and that the eigenvalues are linear in the transition probabilities. In this paper we consider a generalization which can be interpreted as a self-organizing library in which the arrangements of books on each shelf are restricted to be linear extensions of a fixed poset. The moves on the books are given by the extended promotion operators of Ayyer, Klee, and Schilling while the shelves, bookcases, etc. evolve according to the move-to-back moves as in the the self-organizing library of Bj\"orner. We show that the eigenvalues of the transition matrix of this Markov chain are $\pm 1$ integer combinations of the transition probabilities if the posets that prescribe the restrictions on the book arrangements are rooted forests or more generally, if they consist of ordinal sums of a rooted forest and so called ladders. For some of the results we show that the monoids generated by the moves are either $\mathcal{R}$-trivial or, more generally, in $\textbf{DO(Ab)}$ and then we use the theory of left random walks on the minimal ideal of such monoids to find the eigenvalues. Moreover, in order to give a combinatorial description of the eigenvalues in the more general case, we relate the eigenvalues when the restrictions on the book arrangements change only by allowing for one additional transposition of two fixed books.
\end{abstract} 

 

{\renewcommand{\thefootnote}{} \footnote{\emph{E-mail addresses}:
spoznan@clemson.edu (S.~Poznanovi\'c), stasike@g.clemson.edu (K.~Stasikelis)}


\section{Introduction}\label{S: Introduction}

The Tsetlin library is a well studied finite state Markov chain. The states are the permutations of $[n]$ representing $n!$ possible arrangements of $n$ books on a shelf connected via the move-to-back moves: the book $i$ is picked up with probability $x_{i}$  and put at the end of the shelf.  Hendricks~\cite{hendricks1972stationary, hendricks1973extension} found the stationary distribution, while the fact that the eigenvalues of the transition matrix have an elegant formula was discovered (independently) by Donnelly~\cite{donnelly1991heaps}, Kapoor and Reingold~\cite{kapoor1991stochastic}, and Phatarfod~\cite{phatarfod1991matrix}. This Markov chain has been generalized in different ways.  In this paper we consider a generalization of the Tsetlin library which combines the two models from Bj\"orner~\cite{bjorner2009note} and Ayyer et al.~\cite{ayyer2014combinatorial} and whenever possible we use the notation from these two papers.

Consider a rooted tree $T$ whose leaves are all at the same depth (distance from the root), $d$. Let $L$ denote the set of leaves of $T$ and $I$ denote the set of inner nodes (nodes that are not leaves). Suppose that at each inner node $v$ of depth $d-1$ a poset $P_v$ on the children is given; we refer to these as \emph{leaf posets}. A \emph{linear extension} of a poset $P$ is a total ordering $\pi = \pi_1 \pi_{2} \cdots \pi_n$ of its elements such that $\pi_i \prec \pi_j$ implies $i < j$. The set of linear extensions of $P$ is denoted by $\mathcal{L}(P)$. The set of total orderings of $T$ is \[\mathcal{L}(T) \cong \displaystyle \bigotimes_{v \in I} \mathcal{L}(P_v).\]

We will consider a Markov chain with state set $\mathcal{L}(T)$.  The operations are given by certain subsets of  $L$. Specifically, let \[\mathcal{A}(L) = \{ E \subseteq L : \text{ no two elements of } E \text{ are siblings}\}.\] A node $v \in T$ is $E$-related if some descendant of $v$ is contained in $E$. Let $C_v$ be the set of children of the node $v$ and  \[C_v^E = \{v \in C_v : v \text{ is } E\text{-related}\}.\] 

In order to explain the moves in our Markov chain we need to define two operations: extended promotion and pop shuffling. Consider a \emph{naturally labeled} poset $P$ on the set $[n]$, with partial order $\preceq$, where $P$ is naturally labeled if $i \prec j$ in $P$ implies $i < j$ as integers.

The extended promotion operator was introduced in~\cite{ayyer2014combinatorial}. It generalizes Sch{\"u}tzenberger's  promotion operator $\partial$~\cite{schutzenberger1972promotion}, which can be expressed in terms of more elementary operators $\tau_{i}$ as shown in~\cite{haiman1992dual, malvenuto1994evacuation}.  Namely, for $i=1, \ldots,n$ and $\pi=\pi_1 \cdots \pi_n \in \mathcal{L}(P)$, let 
 \[\tau_i \pi = \begin{cases} \pi_1 \cdots \pi_{i-1}\pi_{i+1} \pi_i \cdots \pi_n & \text{if } \pi_i \text{ and } \pi_{i+1} \text{ are incomparable in } P, \\ \pi & \text{otherwise}. \end{cases}\]
 In other words, $\tau_i$ acts nontrivially if the interchange of $\pi_i$ and $\pi_{i+1}$ yields a linear extension of $P$. The \emph{extended promotion operator} $\partial_i$, $1 \leq i \leq n$, on $\mathcal{L}(P)$ is defined by \[\partial_i = \tau_{n-1} \cdots \tau_{i+1}\tau_i\] and, in particular, $\partial_{1} = \partial$. Note that the operators act from the left; so  $\tau_i$ is applied first, then $\tau_{i+1}$, etc. The operator $\widehat{\partial}_{i}$ is defined in the following way:
 \[ \text{ for } \; \; \; \pi, \pi' \in \mathcal{L}(P), \; \; \;  \widehat{\partial}_{i} \pi = \pi' \; \; \; \text{ if and only if } \; \; \; \pi' = \partial_{\pi^{-1}(i)} \pi. \]
 
\begin{exa}
Let $P$ be the poset with Hasse diagram as given in Figure~\ref{firstposet}. Then the set of linear extensions of $P$ is $\mathcal{L}(P) = \{12345,12354, 21345, 21354\}$ and for $\pi = 21345$ we have $\widehat{\partial}_2 \pi = 12354$. 
\end{exa}
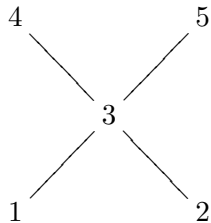
\begin{figure}[hb]
 \[\xymatrix{ 4\ar@{-}[dr] & & 5\ar@{-}[dl] \\ & 3 \ar@{-}[dr]\ar@{-}[dl] & \\ 1 & & 2 }\]
\caption{An example of a poset which is a ladder.}
\label{firstposet}
\end{figure}

The (elementary) pop shuffle is an operation on the elements of the symmetric group $S_n$~\cite{bidigare1999combinatorial, brown1998random}. Thinking again of books on a shelf, this operation models behavior when multiple readers are allowed to check out books before they are returned back on the shelf. The assumption is that after all readers have checked out their books (not all books need to be checked out by someone), the first reader places their books back at the end of the shelf in the order they were originally found. Then the second reader places their books at the end of the shelf in the order they were found. This continues until all readers have returned their books to the end of the shelf. A formal definition of a pop shuffle uses the language of ordered set partitions and is given in Section~\ref{background}.

\begin{exa}
Say 	there are four books labeled $1,2,3,4$ on one shelf and three readers $R_1, R_2, \text{ and } R_3$. Suppose the starting ordering of the books is $3142$. Say $R_1$ checks out books $2$ and $3$, $R_2$ checks out book $4$, and $R_3$ checks out book $1$. Then $R_1$ puts back books $2$ and $3$ in the order they were originally found, i.e., $32$. Then $R_2$ puts back book 4 at the end of the shelf, i.e., the arrangement after this  is $324$. Finally $R_3$ puts back book $1$ at the end of the shelf, so  the result of this pop shuffle is the arrangement $3241$. 
\end{exa}

Now, we define an action of an element $E \in \mathcal{A}(L)$ on $\mathcal{L}(T)$ with probability $x_E$ in the following way. Let $\pi = (\pi_v)_{v \in I}$ be a given total ordering of $T$. Then $\widehat{\partial}_E\pi = (\widehat{\partial}_{E_v}\pi_v)_{v\in I}$ where 
\begin{equation}
\label{operation}
\widehat{\partial}_{E_v}\pi_v = \begin{cases} \widehat{\partial}_{C_v^E} \pi_v & \text{ if depth}(v) = d-1 \\  \underline{\beta}^E_v  \pi_v & \text{ otherwise.} \end{cases}
\end{equation}
Here, $\underline{\beta}^E_v$ means pop shuffling, i.e., moving to back the $E$-related elements. So, in other words, in each move in our Markov chain the total ordering of $T$ is rearranged locally at each inner node so that the elements of $E$ are promoted and the $E$-related elements not in $E$ are moved to the back while their original order is preserved. 

\begin{exa}\label{ex: rootedtree1}
Let $T$ be as in Figure~\ref{treeex1}. 
Then $\mathcal{L}(T) = \{123,132, 312\} \times \{4\} \times \{56, 65\}$. Consider $\pi = (132, 4, 56) \in \mathcal{L}(T)$ and $E = \{1,4\}$, then $C_5^E = \{1\}, C_6^E = \{4\}$, and $C_7^E = \{5,6\}$. So, $\widehat{\partial}_{\{1,4\}}$ promotes $1$ and $4$ within the first two components of $\pi$. On the last component which is comprised of internal nodes, it acts as a pop shuffle. Specifically, since $5$ and $6$ are both $E$-related $\underline{\beta}^E_7$ moves to back both $5$ and $6$ while preserving their original order. Thus, $\widehat{\partial}_{\{1,4\}} \pi = (312, 4, 56).$ As an another illustration, $\widehat{\partial}_{\{1\}} \pi = (312, 4, 65)$ because only 1 and 5 are $\{1\}$-related.
\end{exa}

\begin{figure}[h]
\begin{center}
\scalebox{1}{$$ \xymatrix{  & & 7\ar@{-}[dr]\ar@{-}[dl]&  & & \\ & 5\ar@{-}[d]\ar@{-}[dr]\ar@{-}[dl]& & 6\ar@{-}[d]  & & P_5 = \hspace{-.3in}& 2\ar@{-}[d] & 3 & P_6 = \hspace{-.3in}& 4 \\ 1& 2& 3& 4 & &  & 1 & & & & }  $$} 
\end{center}
\caption{A tree $T$ of depth $2$ and its leaf posets.}
\label{treeex1}
\end{figure}
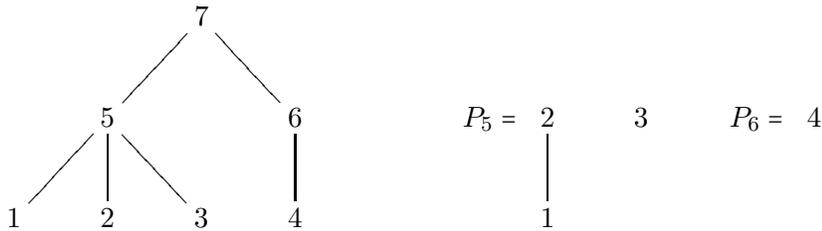

Let $M^{T_{P}}$ be the row stochastic transition matrix of the Markov chain described above with underlying tree $T$ and leaf poset $P$. Since there is no possibility for confusion, in the first three sections we suppress $P$ and write just $M^{T}$.  In this paper we describe the eigenvalues of $M^{T}$ when the leaf posets are rooted forests or consist of ordinal sums of a forest and so called ladders. The precise definitions of all the basic notions needed and the notation are given in Section~\ref{background}. Our main results show that the eigenvalues of $M^{T}$ in these two cases are $\pm 1$ integer combinations of the transition probabilities $x_{E}$. When the underlying tree $T$ is of depth 1, i.e., it has only a root and leaves, this Markov chain reduces to the extended promotion Markov chain given in~\cite{ayyer2014combinatorial}. When no leaf has a sibling, then we recover the move-to-front scheme on trees  in~\cite{bjorner2008random, bjorner2009note}. Based on the fact that both of these chains are irreducible when the probabilities $x_{E}$ are all non-zero, one can deduce that the Markov chain under consideration in this paper is also irreducible. In fact it can easily be seen that it is also aperiodic and thus converges to a unique stationary distribution.  Let $\mathcal{M}^{T}$ be the monoid generated by the transformations on $\mathcal{L}(T)$ induced by the operations $\widehat{\partial}_{E}$, $E \in \mathcal{A}(L)$. Note that sometimes even though $E \neq E'$, the induced transformations may be equal. The monoid $\mathcal{M}^{T}$ acts faithfully on $\mathcal{L}(T)$  and the Markov chain on linear extensions is equivalent to the left random walk on the minimal left ideal of $\mathcal{M}^{T}$ (see for example~\cite{ayyer2015markov}). The properties of this walk depend on the structure of the monoid $\mathcal{M}^{T}$. This was the fact that was used in~\cite{bjorner2009note} (where the monoid is a left-regular band) and~\cite{ayyer2014combinatorial} (where the monoid is $\mathcal{R}$-trivial) but the idea goes back to the seminal paper on random walks on left-regular bands of Brown~\cite{brown2000semigroups}. Recently, a unified framework to compute the stationary distribution of any finite irreducible Markov chain or equivalently of any irreducible random walk on a finite semigroup was developed in~\cite{Rhodes:2017rt}. 

In our case, in Section~\ref{maintheorems} we first show that when the leaf posets are rooted forests the monoid $\mathcal{M}^{T}$ is $\mathcal{R}$-trivial and we use this to find the eigenvalues of the transition matrix. Then we address the case when the leaf posets are ordinal sums of a forest and a ladder. In this case the monoid is no longer $\mathcal{R}$-trivial but we show that it belongs in a larger class, so called $\textbf{DO(Ab)}$. Then we use the recently developed theory for eigenvalues of left random walks for such monoids~\cite{ayyer2015markov, steinberg2006mobius, steinberg2008mobius} to find the eigenvalues. While this approach gives a combinatorial description of the eigenvalues in the case of rooted forests, in the latter case, the results are still expressed in terms of the associated monoid. For that reason, in Section~\ref{alternate} we use the approach we used in~\cite{poznanovic2017properties} to describe how one can compute the eigenvalues in the second case in a more combinatorial way knowing the eigenvalues in the case when the leaf posets are forests. 

Finally, one might ask what happens if we associate a poset to each inner node (and not only to the ones at depth $d-1$) and rearrange multiple books, shelves, etc. at a time while requesting that the result is a linear extension of the associated poset. This is in fact the question that started this work. It requires a generalization of the definition of a pop shuffle to linear extensions. It would be interesting to see if a nice generalization exists, but despite our efforts we could not find one.

\section{Background and Notation}
\label{background}

\subsection{Posets}
 
For a poset $P$, we say $y$ is a \emph{successor} of $x$ if $x \prec y$ and there is no $z$ such that $x \prec z \prec y$.  In this case we call $x \prec y$ a \emph{covering relation}.  A \emph{rooted tree} is a connected poset in which each vertex has at most one successor. A union of rooted trees is called a \emph{rooted forest}. An \emph{upset} (or \emph{upper set}) $S$ in a poset is a subset such that if $x \in S$ and $y \succeq x$, then $y \in S$. The upsets of a given poset form a lattice under inclusion. Consider a poset $P$ with minimal element $\hat{0}$ and maximal element $\hat{1}$; then for each element $x \in P$, the \emph{derangement number} of $x$~\cite{brown2000semigroups} is \begin{equation}\label{equation2} d_x = \displaystyle \sum_{y \preceq x} \mu(x,y) f([y, \hat{1}]),\end{equation} where $f([y, \hat{1}])$ is the number of maximal chains in the interval $[y, \hat{1}]$ and $\mu$ is the M{\"o}bius function~\cite{sta97} recursively defined by \[   \mu(x,y) = \begin{cases} 1 & \text{ if } x = y \\ - \displaystyle \sum_{x\preceq z \prec y} \mu(x,z) & \text{ if } x \prec y \\ 0 & \text{ otherwise.} \end{cases}   \]
 
 Let $P$ and $Q$ be two posets. The \emph{direct sum} of $P$ and $Q$ is the poset $P +Q$ on their disjoint union such that $x \preceq y$ in $P +Q$ if either (a) $x, y \in P$ and $x \preceq y$ in $P$ or (b) $x,y \in Q$ and $x \preceq y$ in $Q$. The \emph{ordinal sum} $P \oplus Q$ is a poset on their union such that:
\begin{enumerate}
\item For $x, y \in P$, $x \preceq y \in P \oplus Q$ if and only if $x \preceq y \in P$. 
\item For $x, y \in Q$, $x \preceq y \in P \oplus Q$ if and only if $x \preceq y \in Q$.  
\item For all $x \in P$ and $y \in Q$, $x \preceq y$ in $P \oplus Q$. 
\end{enumerate}
We will say that the poset $P$ is a \emph{ladder} of rank $k$ if $P = Q_1 \oplus \cdots \oplus Q_k$ where $Q_{i}$ is an antichain of size 1 or 2 for all $i= 1, \ldots, k$. The poset in Figure~\ref{firstposet} is a ladder of rank 3.

To formally define pop shuffles, we need the notion of set partitions. For any set $S$, a \emph{set partition} of $S$ is a set of disjoint nonempty subsets of $S$ whose union is $S$, we refer to each disjoint subset as a \emph{block}. The set of set partitions of $S$ is denoted by $\partition(S)$.  An \emph{ordered set partition} is a set partition with a linear ordering on the blocks. The set of ordered set partitions of $S$ is denoted by $\partition^{\text{ord}}(S)$. We will denote an ordered partition by $\underline{\beta}$ and its underlying set partition by $\beta$. 

$\partition(S)$ is a lattice ordered by reverse refinement. That is, $\alpha \leq \beta$ if and only if each block of the partition $\alpha$  is a union of blocks from $\beta$. In this case, $\beta$ is said to be a \emph{refinement} of $\alpha$. For two ordered set partitions, $\underline{\alpha}= (a_1, \ldots, a_\ell)$ and $\underline{\beta }= (b_1, \ldots, b_m)$, let  \[\underline{\alpha} \circ \underline{\beta} = (a_i \cap b_j),\] where the blocks are ordered by the indices $(i, j)$ in lexicographic order and empty blocks are omitted. With this operation $\partition^{\text{ord}}(S)$ is a monoid (semigroup with identity $(S)$).

Let $ \underline{\beta} = (B_1, \ldots, B_m)$ be an ordered partition of $[n]$ where each $B_i$ is a block. Then $\underline{\beta}$ acts on an element of $S_n$ by taking the elements in $B_1$ to the end of the permutation, while preserving the order in which they occur originally. The elements from $B_2$ are then placed after the elements from $B_1$, while preserving their original order. This is continued until the elements from $B_m$ are placed at the end of the permutation. Such a move $\underline{\beta}$ is called an \emph{elementary pop shuffle} and the action on a permutation $\pi$ is denoted by $\underline{\beta} \pi$. One can see that if $\pi = \pi_{1}\cdots \pi_{n}$, then $\underline{\beta} \pi = \beta \circ (\{\pi_{1} \}, \cdots, \{\pi_{n}\})$ and $(\underline{\alpha} \circ \underline{\beta}) \pi = \underline{\alpha}( \underline{\beta} \pi)$. Recall that  $C_v^E$ is the set of $E$-related children of the vertex $v$. With this notation the linear extension $\underline{\beta}_v^E \pi_{v}$ in~\eqref{operation} is exactly what we get when we take $\underline{\beta}_v^E = ( C_v \setminus C_v^E, C_v^E)$.

For a partition $\alpha =(\alpha_{v}) \in \partition(T \setminus L)$, we say that $E$ is $\alpha$-compatible if $\alpha_v$ is a refinement of $\beta^E_v$ for every $v \in T \setminus L$.

\subsection{$\mathcal{R}$-trivial monoids}

The left and right orders on a semigroup $\mathcal{M}$ were introduced by Green~\cite{green1951structure}. We follow the same convention as in~\cite{ayyer2014combinatorial}. 

Let $\mathcal{M}$ be a semigroup. For $x, y \in \mathcal{M}$, the left and right orders are defined by 
\begin{equation}
\begin{split}
x \leq_{\mathcal{R}} y\text{ if } y = xu \text{ for some } u \in \mathcal{M}, \\
x \leq_{\mathcal{L}} y\text{ if } y = ux \text{ for some } u \in \mathcal{M}. 
\end{split}
\end{equation}

\noindent A monoid $\mathcal{M}$ is said to be $\mathcal{R}$-trivial if $y \mathcal{M} = x \mathcal{M}$ implies $x = y$. 

A finite monoid $\mathcal{M}$ is said to be weakly ordered~\cite{schocker2008radical} if there is a finite upper semilattice $(L^{\mathcal{M}}, \preceq)$ together with two maps $\supp, \des \colon \mathcal{M} \rightarrow L^{\mathcal{M}}$ satisfying the following three axioms$\colon$
\begin{enumerate}
\item $\supp$ is a surjective monoid morphism, that is, $\supp(xy) = \supp(x) \vee \supp(y)$ for all $x, y \in \mathcal{M}$ and $\supp(\mathcal{M}) = L^{\mathcal{M}}$. 
\item If $x,y \in \mathcal{M}$ are such that $xy \leq_{\mathcal{R}}x$, then $\supp(y) \preceq \des(x)$. 
\item If $x, y \in \mathcal{M}$ are such that $\supp(y) \preceq \des(x)$, then $xy= x$. 
\end{enumerate}

\begin{thm}[\cite{berg2011primitive}]\label{ltrivial} Let $\mathcal{M}$ be a finite monoid. Then $\mathcal{M}$ is weakly ordered if and only if $\mathcal{M}$ is $\mathcal{R}$-trivial. 
\end{thm}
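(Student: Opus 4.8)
My plan is to first reduce the statement to a single order-theoretic fact: since $x \leq_{\mathcal{R}} y$ means $y = xu$ for some $u$, one has $x\mathcal{M} = y\mathcal{M}$ exactly when $x \leq_{\mathcal{R}} y$ and $y \leq_{\mathcal{R}} x$, so $\mathcal{M}$ is $\mathcal{R}$-trivial if and only if $\leq_{\mathcal{R}}$ is antisymmetric, i.e. a partial order. For the easy direction (weakly ordered $\Rightarrow$ $\mathcal{R}$-trivial) I would take $x \leq_{\mathcal{R}} y$ and $y \leq_{\mathcal{R}} x$, say $y = xu$ and $x = yv$, and set $a = x$, $b = uv$. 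Then $ab = x(uv) = (xu)v = yv = x = a$, so $ab \leq_{\mathcal{R}} a$ (with the identity as witness). Axiom (2) gives $\supp(b) \preceq \des(a)$, and since $\supp(u) \preceq \supp(u) \vee \supp(v) = \supp(uv) = \supp(b)$ by Axiom (1), we get $\supp(u) \preceq \des(x)$. Axiom (3) then yields $xu = x$, i.e. $y = x$, so $\leq_{\mathcal{R}}$ is antisymmetric.

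The substance is the converse, and here the plan is to construct the weak-order data explicitly. For $\supp$ I would take the projection $\supp \colon \mathcal{M} \to L^{\mathcal{M}}$ onto the maximal semilattice quotient of $\mathcal{M}$; this is by construction a surjective monoid morphism onto a finite upper semilattice with $\supp(xy) = \supp(x) \vee \supp(y)$ and $\supp(1) = \hat{0}$, so Axiom (1) holds. For $\des$ I would use the right stabilizer $R_x = \{u \in \mathcal{M} : xu = x\}$, which is a submonoid, and set $\des(x) = \bigvee_{u \in R_x} \supp(u)$. Because $\supp$ is a morphism and $R_x$ is closed under products, this join is realized as $\des(x) = \supp(e_x)$ for a single idempotent $e_x \in R_x$ (take $e_x = z^{\omega}$, the idempotent power of the product $z$ of all elements of $R_x$). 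Axiom (2) is then immediate: if $xy \leq_{\mathcal{R}} x$ then $x = (xy)w$ for some $w$, so $x(yw) = x$, i.e. $yw \in R_x$, whence $\supp(y) \preceq \supp(yw) \preceq \des(x)$.

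It remains to verify Axiom (3), and this is where $\mathcal{R}$-triviality must be used. The finish is short once one has the key lemma that for an idempotent $e$, the inequality $\supp(y) \preceq \supp(e)$ forces $eye = e$. Granting this with $e = e_x$, and using $xe_x = x$, I would insert $e_x$ on the left and compute $xy e_x = (xe_x)y e_x = x(e_x y e_x) = xe_x = x$. Thus $x = (xy)e_x$ gives $xy \leq_{\mathcal{R}} x$, while $x \leq_{\mathcal{R}} xy$ is automatic; antisymmetry of $\leq_{\mathcal{R}}$ (the standing $\mathcal{R}$-triviality hypothesis) then yields $xy = x$, which is Axiom (3).

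The main obstacle is exactly this lemma: passing from the semilattice inequality $\supp(y) \preceq \supp(e)$ to the multiplicative identity $eye = e$. It fails without an aperiodicity hypothesis --- for instance in $S_2 = \{1, s\}$ one has $\supp(s) = \supp(1)$ but $1 \cdot s \cdot 1 = s \neq 1$ --- so the argument must genuinely exploit that a finite $\mathcal{R}$-trivial monoid is aperiodic. I would prove the lemma from the concrete description of the congruence defining the maximal semilattice quotient of a finite monoid together with the defining pseudoidentity $(uv)^{\omega} u = (uv)^{\omega}$ of $\mathcal{R}$-trivial monoids, which lets one absorb $y$ into the idempotent $e$ once $\supp(y) \preceq \supp(e)$; the remaining steps are routine bookkeeping on the semilattice quotient.
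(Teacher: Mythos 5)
The paper does not actually prove this statement: it imports it from \cite{berg2011primitive} and only records afterwards that for an $\mathcal{R}$-trivial monoid the semilattice $L^{\mathcal{M}}$ may be taken to be the idempotent-generated left ideals under reverse inclusion, with $\supp(x)=\mathcal{M}x^{\omega}$. So there is no in-paper argument to match yours against, and your proposal must be judged on its own. Your forward direction (weakly ordered $\Rightarrow$ $\mathcal{R}$-trivial) is complete and correct. In the converse you depart from the source by using the abstract maximal semilattice quotient for $L^{\mathcal{M}}$ rather than the concrete left-ideal semilattice; Axioms (1) and (2) then check out exactly as you say, and your reduction of Axiom (3) to the lemma ``$e$ idempotent and $\supp(y)\preceq\supp(e)$ imply $eye=e$'' is sound. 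That lemma, which you rightly isolate as the crux but leave as a sketch, is true, and there is a cleaner route to it than the one you outline: the map $\phi(y)=1$ if $ey=e$ and $\phi(y)=0$ otherwise is a monoid morphism onto the two-element multiplicative semilattice $\{1,0\}$ --- the only nontrivial point is that $eyz=e$ forces $ey=e$, which follows because $e\leq_{\mathcal{R}}ey\leq_{\mathcal{R}}e$ and $\mathcal{R}$-triviality then give $ey=e$ --- hence $\phi$ factors through $\supp$ by the universal property of the maximal semilattice quotient, and $\supp(y)\preceq\supp(e)$ together with $\phi(e)=1$ yields $ey=e$, a fortiori $eye=e$. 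This bypasses the murkiest part of your plan, namely an explicit description of the least semilattice congruence (the ``mutual division of powers'' picture is delicate for noncommutative semigroups and you should not rely on it without proof). In exchange for this extra lemma, your choice of $L^{\mathcal{M}}$ spares you from verifying that the idempotent-generated left ideals form an upper semilattice and that $x\mapsto\mathcal{M}x^{\omega}$ is a morphism, which is where the work sits in the standard proof.
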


For an $\mathcal{R}$-trivial monoid $\mathcal{M}$, the associated semilattice $L^\mathcal{M}$ can be taken to be the set of left ideals generated by the idempotents in $\mathcal{M}$ ordered by reverse inclusion.

We will apply the following theorem for $\mathcal{R}$-trivial monoids to describe the eigenvalues of the transition matrix $M^{T_{P}}$ in the case when the leaf posets are rooted forests (Theorem~\ref{theorem: main1}). Let $\mathcal{C}$ be the set of chambers, that is, the set of maximal elements in the monoid $\mathcal{M}$ under $\geq_{\mathcal{R}}$. For $X \in L^{\mathcal{M}}$, define $c_X$ to be the number of chambers in $\mathcal{M}_{\geq X}$. This is precisely the number of $c \in \mathcal{C}$ such that $c \geq_{\mathcal{R}} x$, where $x \in \mathcal{M}$ is any fixed element such that $\supp(x) = X$.

\begin{thm}[\cite{ayyer2015markov}]\label{theorem: rmonoid}
Let $\{w_x\}$ be a probability distribution on $\mathcal{M}$, a finite $\mathcal{R}$-trivial monoid, that acts on the state space $\Omega$.  Let $M$ be the transition matrix for the random walk of $\mathcal{M}$ on $\Omega$ driven by the $w_x$'s. For each $X \in L^{\mathcal{M}}$ and $x$ such that $\supp(x) = X$, $M$ has an eigenvalue
\begin{equation}
\label{rev}
\lambda_X = \displaystyle \sum_{\supp(y) \preceq X} w_y
\end{equation}
\noindent with (possibly null) multiplicity given by 
\begin{equation}
\label{rmult}
m_X = \displaystyle \sum_{Y \succeq X} \mu(X, Y) c_Y,
\end{equation}
\noindent where $\mu$ is the M\"obius function of $L^{\mathcal{M}}$. These are all the eigenvalues of $M$.
\end{thm}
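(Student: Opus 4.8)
The plan is to realize $M$ as a single algebra element acting on a module and then read its spectrum off the representation theory of $\mathcal{R}$-trivial monoids. Set $w = \sum_{x} w_x\, x \in \mathbb{C}\mathcal{M}$. Because the walk is the left random walk on the minimal left ideal, I would first identify the state space with the set of chambers $\mathcal{C}$ (the minimal left ideal), so that $M$ is exactly the matrix of left multiplication by $w$ on the module $\mathbb{C}\mathcal{C}$; the eigenvalues and multiplicities of $M$ are then governed by the action of $w$ on the composition factors of $\mathbb{C}\mathcal{C}$.

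Next I would record the representation theory that is needed. Since $\mathcal{M}$ is $\mathcal{R}$-trivial it is $\mathcal{H}$-trivial, its maximal subgroups are trivial, and hence every simple $\mathbb{C}\mathcal{M}$-module is one-dimensional. Using the weakly ordered structure from Theorem~\ref{ltrivial}, these simple modules are indexed by $X \in L^{\mathcal{M}}$, with $x \in \mathcal{M}$ acting on $S_X$ by the scalar $\chi_X(x) = \mathbf{1}[\supp(x) \preceq X]$; this is multiplicative precisely because $\supp$ is a semilattice morphism (Axiom 1), so $\supp(xy)\preceq X$ iff $\supp(x)\preceq X$ and $\supp(y)\preceq X$. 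Consequently $w$ acts on $S_X$ by $\chi_X(w) = \sum_{\supp(y)\preceq X} w_y = \lambda_X$. Choosing any composition series of $\mathbb{C}\mathcal{C}$ makes $w$ block upper-triangular with one-dimensional diagonal blocks, so the eigenvalues of $M$ are exactly the $\lambda_X$, each occurring with multiplicity $m_X$ equal to the number of composition factors isomorphic to $S_X$ (possibly zero), and there are no others.

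It then remains to compute the composition multiplicities $m_X$, which I would do with an idempotent trace computation. Fix an idempotent $e_X \in \mathcal{M}$ with $\supp(e_X) = X$; its action on $\mathbb{C}\mathcal{C}$ is an idempotent operator, so its rank equals its trace. On the one hand, trace depends only on composition factors and $e_X$ acts on $S_Y$ by $\chi_Y(e_X) = \mathbf{1}[X \preceq Y]$, whence $\operatorname{tr}(e_X \mid \mathbb{C}\mathcal{C}) = \sum_{Y \succeq X} m_Y$. On the other hand, $c \mapsto e_X c$ is an idempotent self-map of the basis $\mathcal{C}$ whose image is exactly the set of chambers $\geq_{\mathcal{R}} e_X$ (if $c \geq_{\mathcal{R}} e_X$ then $e_X c = c$, and conversely every $e_X c$ lies above $e_X$), so its rank is the number of such chambers, namely $c_X$. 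Equating gives $c_X = \sum_{Y \succeq X} m_Y$, and dual M\"obius inversion over $L^{\mathcal{M}}$ yields $m_X = \sum_{Y \succeq X}\mu(X,Y)\,c_Y$.

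The step I expect to be the main obstacle is the representation-theoretic input: showing that the one-dimensional modules $S_X$, with character the support indicator, really do exhaust the simple $\mathbb{C}\mathcal{M}$-modules. This is exactly where $\mathcal{R}$-triviality (equivalently, the weakly ordered axioms of Theorem~\ref{ltrivial}) is essential, and it underpins both the eigenvalue formula and the triangularization; one can in fact bypass the abstract theory and triangularize $w$ directly on $\mathbb{C}\mathcal{C}$ using Axioms 2 and 3 (which say that low-support elements act as the identity after any $x$ with large enough $\des(x)$), but either way this is the delicate part. By contrast, once the characters $\chi_X$ and the identification of $\Omega$ with $\mathcal{C}$ are in place, the multiplicity computation is just the short trace-equals-rank argument above followed by M\"obius inversion.
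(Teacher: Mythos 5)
The paper does not actually prove this statement: Theorem~\ref{theorem: rmonoid} is quoted from~\cite{ayyer2015markov} and used as a black box, so there is no in-paper proof to compare against. Your reconstruction is essentially the argument of that reference: identify $\Omega$ with the minimal left ideal $\mathcal{C}$ (for an $\mathcal{R}$-trivial monoid this is exactly the set of maximal elements under $\geq_{\mathcal{R}}$, and it is a left ideal, so your idempotent map $c\mapsto e_Xc$ does stay inside $\mathcal{C}$), realize $M$ as left multiplication by $w=\sum_x w_x x$ on $\mathbb{C}\mathcal{C}$, read the eigenvalues off one-dimensional composition factors, and extract the multiplicities from the trace of $e_X$ plus dual M\"obius inversion. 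That skeleton is sound and the computations ($\chi_X(w)=\lambda_X$, $\operatorname{tr}(e_X\mid\mathbb{C}\mathcal{C})=\sum_{Y\succeq X}m_Y=c_X$) are correct.

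The one link that needs repair is the representation-theoretic input you yourself flag as delicate. You argue: $\mathcal{R}$-trivial $\Rightarrow$ $\mathcal{H}$-trivial $\Rightarrow$ trivial maximal subgroups $\Rightarrow$ every simple $\mathbb{C}\mathcal{M}$-module is one-dimensional. The last implication is false for general aperiodic monoids (the aperiodic Brandt monoid $B_2^1$ has trivial maximal subgroups but a two-dimensional irreducible); by Theorem~\ref{complexrep} the correct criterion is membership in $\mathbf{DO(Ab)}$, which requires in addition that the regular $\mathcal{J}$-classes be orthodox. Two ways to close this for $\mathcal{R}$-trivial $\mathcal{M}$: (i) observe that a regular $\mathcal{J}$-class of an $\mathcal{R}$-trivial monoid is a single $\mathcal{L}$-class all of whose elements are idempotent, i.e.\ a left-zero band, hence orthodox, so $\mathcal{M}\in\mathbf{DO(Ab)}$; or (ii) avoid dimensions entirely and count: by Clifford--Munn--Ponizovskii the number of irreducible $\mathbb{C}\mathcal{M}$-modules equals the number of regular $\mathcal{J}$-classes (maximal subgroups being trivial), which for $\mathcal{R}$-trivial $\mathcal{M}$ equals $|L^{\mathcal{M}}|$, and you have already exhibited $|L^{\mathcal{M}}|$ pairwise distinct one-dimensional characters $\chi_X(x)=\mathbf{1}[\supp(x)\preceq X]$, so they exhaust the irreducibles. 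With either patch the remainder of your argument goes through as written.
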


\subsection{The \textbf{DO(Ab)} class}

Let $x, y \in S$ for a semigroup $S$. Then  \[x \leq_{\mathcal{J}} y \ \ \text{ if } x= u y v \text{ for some } u, v \in S.\]

The elements $x$ and $y$ are in the same $\mathcal{J}$-class if $x \leq_{\mathcal{J}} y$ and $y \leq_{\mathcal{J}} x$. In particular, $x$ and $y$ are $\mathcal{J}$-equivalent if and only if $SxS = SyS$, i.e., if they generate the same two-sided ideal.  Let $\mathcal{P}(S)$ be the poset of $\mathcal{J}$-classes where $J \preceq_{\mathcal{J}} J'$ if $x \preceq_{\mathcal{J}} y$ for all (any) $x \in J$ and $y \in J'$. A $\mathcal{J}$-class is an \emph{orthodox semigroup} if the idempotents  form a subsemigroup. For a finite semigroup $S$ and an idempotent element $x \in S$, the \emph{maximal subgroup} is the group of units $(\{u :  \exists v, uv = vu = \text{id}\})$ of the submonoid $xSx$. The maximal subgroups depends only on the $\mathcal{J}$-class of $x$ up to isomorphism. A semigroup is \emph{regular} if for each element $x$ in $S$ there exists $y$ such that $yxy = y$. The class of \textbf{DO(Ab)} consists of all finite semigroups whose regular $\mathcal{J}$-classes are orthodox semigroups and whose maximal subgroups are abelian.

The following theorem of Steinberg~\cite{steinberg2006mobius} characterizes the monoids in \textbf{DO(Ab)} in terms of their representations. 

\begin{thm}[\cite{steinberg2006mobius}]\label{complexrep}
Let $S$ be a finite semigroup. Then the following are equivalent$\colon$
\begin{enumerate}
\item $S \in \mathrm{\mathbf{DO(Ab)}}$;
\item every irreducible complex representation of $S$ is a homomorphism $\phi: S \rightarrow \mathbb{C}$;
\item every complex representation of $S$ is equivalent to one by upper triangular matrices;
\item $S$ admits a faithful complex representation by upper triangular matrices.
\end{enumerate}
\end{thm}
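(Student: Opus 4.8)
The plan is to prove the cycle of implications $(1)\Rightarrow(2)\Rightarrow(3)\Rightarrow(4)\Rightarrow(1)$, with the representation-theoretic core being the equivalence $(1)\Leftrightarrow(2)$ and the two middle implications being soft module-theoretic facts. The essential tool I would use is the Clifford--Munn--Ponizovskii description of the irreducible representations of a finite semigroup: they are parametrized by pairs $(J,\rho)$, where $J$ runs over the regular $\mathcal{J}$-classes of $S$ and $\rho$ over the irreducible representations of the maximal subgroup $G_J$, and the degree of the irreducible attached to $(J,\rho)$ equals the ordinary rank over $\mathbb{C}$ of the block matrix $\rho(P_J)$ obtained by applying $\rho$ (with $\rho(0)=0$) to the Rees sandwich matrix $P_J$ of the principal factor $J^0\cong M^0(G_J;A,B;P_J)$. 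I would take this degree formula as the starting point.

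For $(1)\Rightarrow(2)$ and its converse I would argue directly from the degree formula. If $S\in\mathbf{DO(Ab)}$, then each regular $\mathcal{J}$-class is an orthodox completely simple semigroup with abelian maximal subgroup, hence isomorphic to $L\times G_J\times R$ with $G_J$ abelian; in Rees coordinates the sandwich matrix normalizes to be constant and $\rho$ is a one-dimensional character, so $\rho(P_J)$ is a nonzero scalar matrix of rank $1$ and every irreducible has degree $1$. Conversely, suppose all irreducibles have degree $1$. Since the structure matrix is regular (each row and column has a nonzero entry), any nonzero block $\rho(p_{ba})$ is an invertible $(\dim\rho)\times(\dim\rho)$ matrix, so $\operatorname{rank}\rho(P_J)\ge\dim\rho$; degree $1$ then forces $\dim\rho=1$ for all $\rho$, i.e.\ $G_J$ abelian. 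Applying this to the trivial character, the $0/1$ pattern matrix of $P_J$ has rank $1$ and no zero row or column, hence is all-ones, so $J$ has no structural zeros and is completely simple (a subsemigroup). Finally, using that the characters of an abelian group separate points, the rank-one condition over all characters forces the normalized sandwich matrix to be trivial, equivalently that $J$ is orthodox, so $S\in\mathbf{DO(Ab)}$.

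The implications $(2)\Rightarrow(3)\Rightarrow(4)$ are routine. For $(2)\Rightarrow(3)$, I view a representation as a finite-dimensional $\mathbb{C}[S]$-module and take a composition series; every composition factor is simple, hence one-dimensional by $(2)$, so a basis adapted to the resulting flag exhibits the representation in upper-triangular form. For $(3)\Rightarrow(4)$, the left regular representation of $S$ on $\mathbb{C}[S^1]$ (where $S^1$ adjoins an identity) is faithful, since $\lambda_s(1)=s$; triangularizing it via $(3)$ yields a faithful upper-triangular representation.

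The remaining implication $(4)\Rightarrow(1)$ is where I expect the real work. From a faithful embedding $\iota\colon S\hookrightarrow T_n(\mathbb{C})$ one first recovers that maximal subgroups are abelian: a finite subgroup of the upper-triangular group meets the unipotent radical trivially, since a finite-order unipotent matrix in characteristic zero is the identity, so the subgroup injects into the diagonal torus and is abelian. Establishing that the regular $\mathcal{J}$-classes are orthodox is the main obstacle; here I would combine the fact that the $\mathcal{J}$-preorder refines matrix rank (so $\mathcal{J}$-equivalent idempotents have equal rank) with an analysis of how triangular idempotents of a common rank multiply, reducing to the completely simple case and showing that the idempotents of each regular $\mathcal{J}$-class are closed under multiplication. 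Closing the cycle then gives all four equivalences. The one genuinely subtle point throughout is the degree formula and its translation into the structural conditions defining $\mathbf{DO(Ab)}$; everything else is comparatively mechanical.
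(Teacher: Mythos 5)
The paper does not actually prove this statement: it is quoted verbatim from Steinberg~\cite{steinberg2006mobius}, so there is no internal proof to compare yours against, and your proposal must be judged on its own terms. Three of your four legs are sound. The equivalence $(1)\Leftrightarrow(2)$ via the Clifford--Munn--Ponizovskii parametrization and the degree formula $\dim = \operatorname{rank}\rho(P_J)$ is the standard argument and is correct as sketched: regularity of the sandwich matrix forces $\dim\rho \le \operatorname{rank}\rho(P_J)$, so all $H_J$ are abelian; the trivial character forces the zero pattern of $P_J$ to be the all-ones matrix, so each regular $\mathcal{J}$-class is completely simple; and separation of points by the characters of the abelian $H_J$ forces the normalized sandwich matrix to be trivial, which is exactly orthodoxy. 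The implications $(2)\Rightarrow(3)$ (composition series) and $(3)\Rightarrow(4)$ (faithfulness of the regular representation of $S^1$) are routine, as you say.

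The gap is $(4)\Rightarrow(1)$, and you have in effect conceded it. A minor repairable point first: the maximal subgroup at an idempotent $e$ has identity $e$, which is generally not $I_n$, so ``a finite subgroup of the upper-triangular group injects into the torus'' does not literally apply; you must first restrict to $\operatorname{im}(e)$ equipped with the flag $eV_1\subseteq\cdots\subseteq eV_n$, which the group preserves, and run the unipotent-torsion argument there. The serious issue is orthodoxy: in Rees coordinates, orthodoxy of a completely simple class $M(G;A,B;P)$ amounts to the identity $p_{b'a}\,p_{ba}^{-1}\,p_{ba'}\,p_{b'a'}^{-1}=1$ on the sandwich matrix, and nothing in your sketch extracts this from the existence of a single faithful triangular embedding; you offer only a plan (``an analysis of how triangular idempotents of a common rank multiply''). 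Even the preliminary claim that each regular $\mathcal{J}$-class is a subsemigroup---needed before ``reducing to the completely simple case'' makes sense---is asserted rather than proved. The natural route is to track diagonals: $\mathcal{J}$-equivalent triangular idempotents have equal $0/1$ diagonals, hence $\delta(ef)=\delta(e)$ and $(ef)^{\omega}$ has rank equal to that of $e$; but one must still show $ef$ lands in $J$ and is idempotent, and rank bookkeeping alone does not give $\mathcal{J}$-equivalence in an abstract finite subsemigroup. As written, the proof is incomplete at exactly the step you identify as the main obstacle, which is where the real content of $(4)\Rightarrow(1)$ lives.
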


The following theorem of Steinberg~\cite{steinberg2006mobius,steinberg2008mobius} gives an explicit representation of the eigenvalues for the left random walk on a minimal left ideal of a semigroup in the class \textbf{DO(Ab)}. 

\begin{thm}[\cite{steinberg2006mobius,steinberg2008mobius}]\label{theorem: doab}
Let $S \in \mathrm{\mathbf{DO(Ab)}}$ with generating set $X$ and let $L$ be a minimal left ideal. Assume that $S$ has left identity. Choose a maximal subgroup $H_J$, with identity $e_J$, for each regular $\mathcal{J}$-class $J$. Let $\{w_x\}_{x \in X}$ be a probability distribution on $X$. Then the transition matrix for the left random walk on $L$ can be placed in upper triangular form over $\mathbb{C}$. Moreover, there is an eigenvalue $\lambda_{(J, \chi)}$ for each regular $\mathcal{J}$-class $J$ and irreducible character $\chi$ of $H_J$ given by the formula
\begin{equation} \label{evdoab} \lambda_{(J, \chi)} = \displaystyle \sum_{x \in X, x \geq_\mathcal{J} J} w_x \cdot \chi(e_J x e_J) \end{equation}
with multiplicity 
\begin{equation} \label{mdoab} \frac{1}{|H_J|} \displaystyle \sum_{x \in H_J} \chi(x^{-1})  \sum_{\substack{  J' \in \mathcal{P}(S) \\J' \preceq J }} |\mathrm{Fix}_L(e_{J'} x e_{J'})| \mu(J', J) \end{equation}
where $\mathrm{Fix}_L(s)$ is the number of fixed points of $s$ acting on the left of $L$. Some of these multiplicities may be 0 but these are all the eigenvalues of the transition matrix.
\end{thm}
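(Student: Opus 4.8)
The plan is to realize the transition matrix $M$ as the matrix of left multiplication by the \emph{random element} $W = \sum_{x \in X} w_x\, x$ acting on $\mathbb{C}L$, the linearization of the minimal left ideal $L$ viewed as a left $S$-set (where $x$ acts by $\ell \mapsto x\ell$). The hypothesis that $S$ has a left identity guarantees that $W$ acts as the genuine one-step transition operator, so that $M$ and left multiplication by $W$ on $\mathbb{C}L$ share the same characteristic polynomial. By Theorem~\ref{complexrep}, every complex representation of $S \in \mathbf{DO(Ab)}$ is equivalent to one by upper triangular matrices and every irreducible is one-dimensional; hence $\mathbb{C}L$ is triangularizable and the eigenvalues of $M$ are exactly the scalars $\phi(W) = \sum_{x} w_x\, \phi(x)$, where $\phi$ ranges over the one-dimensional irreducible constituents of $\mathbb{C}L$, each appearing with multiplicity equal to its composition multiplicity in $\mathbb{C}L$. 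This already establishes the triangular-form claim and reduces the theorem to (i) identifying the irreducibles and their values at $W$, and (ii) computing composition multiplicities.

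For (i) I would invoke the Clifford--Munn--Ponizovskii parametrization: the irreducible representations of the finite semigroup $S$ are indexed by pairs $(J,\chi)$ with $J$ a regular $\mathcal{J}$-class and $\chi$ an irreducible character of the maximal subgroup $H_J$. Since $S \in \mathbf{DO(Ab)}$ each $H_J$ is abelian, so $\chi$ is linear and the associated irreducible $\phi_{(J,\chi)} \colon S \to \mathbb{C}$ is given explicitly by $\phi_{(J,\chi)}(s) = \chi(e_J s e_J)$ when $e_J s e_J$ lies in the group $H_J$ and $\phi_{(J,\chi)}(s)=0$ otherwise. The condition $e_J s e_J \in H_J$ is precisely $s \geq_{\mathcal{J}} J$; substituting $s = x$ and summing against $w_x$ yields $\lambda_{(J,\chi)} = \sum_{x \geq_{\mathcal{J}} J} w_x\, \chi(e_J x e_J)$, which is formula~\eqref{evdoab}.

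For (ii), the character of $\mathbb{C}L$ evaluated at $s \in S$ is the fixed-point count $|\mathrm{Fix}_L(s)|$, since the trace of the $0/1$ matrix by which $s$ acts on $\mathbb{C}L$ counts the $\ell \in L$ with $s\ell = \ell$. Because $\mathbb{C}L$ is triangularizable but not semisimple, ordinary character orthogonality does not apply directly; instead I would extract composition multiplicities by the semigroup analogue of the Brauer-character formula, restricting to the maximal subgroup and correcting across the $\mathcal{J}$-order. Concretely, the multiplicity of $\phi_{(J,\chi)}$ is obtained by first applying group-character orthogonality over $H_J$, producing the factor $\frac{1}{|H_J|}\sum_{x \in H_J}\chi(x^{-1})(\cdots)$, and then isolating the contribution at level $J$ — stripping off the fixed points coming from strictly higher $\mathcal{J}$-classes — by M\"obius inversion over the poset $\mathcal{P}(S)$, which introduces $\sum_{J' \preceq J}\mu(J',J)\,|\mathrm{Fix}_L(e_{J'} x e_{J'})|$. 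Combining these gives formula~\eqref{mdoab}.

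The main obstacle is step (ii): making the passage from fixed-point data to composition multiplicities rigorous in the non-semisimple setting. This requires the full machinery of Steinberg's M\"obius-function approach to semigroup representations~\cite{steinberg2006mobius, steinberg2008mobius} — in particular a verification that the generalized character of a triangularizable representation is determined by its composition factors (so that the Brauer-type orthogonality is valid), a clean bookkeeping of which idempotents $e_{J'}$ and which $\mathcal{J}$-classes actually contribute fixed points to $L$, and a careful interchange of the averaging over $H_J$ with the M\"obius inversion over $\mathcal{P}(S)$. By contrast, the eigenvalue formula and the triangularizability are comparatively routine once Theorem~\ref{complexrep} and the Clifford--Munn--Ponizovskii description are in hand.
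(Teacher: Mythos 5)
The paper does not prove this statement: Theorem~\ref{theorem: doab} is imported verbatim from Steinberg~\cite{steinberg2006mobius,steinberg2008mobius} and used as a black box in the proof of Theorem~\ref{theorem: main2}, so there is no internal argument to compare yours against. On its own terms, your outline faithfully reconstructs the strategy of the cited source: identify the spectrum of the walk with the multiset of values $\phi(W)$, $W=\sum_x w_x x$, over the irreducible constituents of $\mathbb{C}L$ (legitimate because Theorem~\ref{complexrep} forces triangularizability and one-dimensional irreducibles), parametrize those irreducibles by pairs $(J,\chi)$ via the Clifford--Munn--Ponizovskii theory to get~\eqref{evdoab}, and recover multiplicities from the fixed-point character of $\mathbb{C}L$ by orthogonality over $H_J$ combined with M\"obius inversion over $\mathcal{P}(S)$ to get~\eqref{mdoab}. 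The two places you flag as requiring care are indeed where all the content lives and are not discharged in your sketch: (i) that composition multiplicities of a non-semisimple complex module are determined by its trace function (true in characteristic zero because the irreducible characters of a finite-dimensional algebra are linearly independent, which is what validates the Brauer-type extraction); and (ii) that for $S\in\mathbf{DO(Ab)}$ one has $e_J s e_J\in H_J$ precisely when $s\geq_{\mathcal{J}}J$, which is what makes $\phi_{(J,\chi)}(s)=\chi(e_J s e_J)$ a well-defined homomorphism and is exactly where the orthodoxy of the regular $\mathcal{J}$-classes enters. So what you have is a correct roadmap of Steinberg's proof rather than a self-contained argument, which is an appropriate level of detail given that the paper itself treats the result as external.
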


\section{The eigenvalues of $M^{T_{P}}$: an algebraic treatment}
\label{maintheorems}

In this section we describe the eigenvalues of the transition matrix $M^{T}$ of the Markov chain from Section~\ref{S: Introduction} for certain classes of leaf posets. The first main result treats the case when the leaf posets are rooted forests. 

\begin{thm}\label{theorem: main1} 
Let $T$ be a rooted tree of depth $d$ with $k$ vertices at depth $d-1$: $v_{1}, \ldots, v_{k}$. Suppose all leaf posets are rooted forests and let $M^{T}$ be the transition matrix of the random walk on $\mathcal{L}(T)\colon$
\[M^{T}(\pi, \pi') = \sum_{E : \widehat{\partial}_E \pi = \pi'} x_E\]
\noindent for $\pi, \pi' \in \mathcal{L}(T)$. 
Then for an upset $S$ of $P$ and $\displaystyle \alpha \in \partition(T\setminus L)= \bigotimes_{\substack{v \in I \\ \mathrm{d}(v) \neq d-1}} \partition(C_{v}) $, $M^{T}$ has an eigenvalue 
\[ \lambda_{(S, \alpha)} = \displaystyle \sum_{\substack{E \subseteq S, \ E \in \mathcal{A}(L) \\ E \text{ is } \alpha\text{-compatible}}} x_E \]
\noindent with multiplicity $m_{(S, \alpha)} = d_{S_1} \cdots d_{S_k} m_\alpha,$
where  $d_{S_i}$ is the derangement number of $S_i = S \cap P_{v_i}$ in the lattice of upsets of $P_{v_i}$ and \begin{equation}\label{equation3} m_\alpha = \displaystyle \prod_{\substack{v \in I \\ \text{d}(v) \neq d-1}}  \prod_{B \in \alpha_v} (|B| -1)!.\end{equation} These are all the eigenvalues of $M^{T}$.
\end{thm}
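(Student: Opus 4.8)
The plan is to realize $M^{T}$ as the transition matrix of a left random walk on an $\mathcal{R}$-trivial monoid and then read off the eigenvalues and multiplicities from Theorem~\ref{theorem: rmonoid}. The key observation is that the action of each generator $\widehat{\partial}_{E}$ on $\mathcal{L}(T)\cong\bigotimes_{v\in I}\mathcal{L}(P_v)$ is a \emph{product} of local actions: by~\eqref{operation}, $\widehat{\partial}_{E}\pi=(\widehat{\partial}_{E_v}\pi_v)_{v\in I}$, where on a depth-$(d-1)$ node $v_i$ it is the extended promotion $\widehat{\partial}_{C^{E}_{v_i}}$ on $\mathcal{L}(P_{v_i})$, and on a higher inner node $v$ it is the pop shuffle $\underline{\beta}^{E}_v=(C_v\setminus C^{E}_v,\,C^{E}_v)$. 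Consequently $\mathcal{M}^{T}$ embeds as a submonoid of $\prod_{v\in I}\mathcal{M}_v$, where $\mathcal{M}_{v_i}$ is the extended-promotion monoid on $\mathcal{L}(P_{v_i})$ and, for $\mathrm{d}(v)\neq d-1$, $\mathcal{M}_v$ is the monoid generated by the move-to-back operators on the orderings of $C_v$.

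First I would establish $\mathcal{R}$-triviality. For the leaf factors, the extended-promotion monoid of a rooted forest is $\mathcal{R}$-trivial by~\cite{ayyer2014combinatorial}. For the higher inner nodes, the move-to-back monoid is a left-regular band (it sits inside the monoid $\partition^{\mathrm{ord}}(C_v)$ of ordered set partitions under $\circ$), hence $\mathcal{R}$-trivial, as in~\cite{bjorner2009note, brown2000semigroups}. Since the class of finite $\mathcal{R}$-trivial monoids is a pseudovariety --- closed under finite direct products and submonoids --- the factorization above gives that $\mathcal{M}^{T}$ is $\mathcal{R}$-trivial. Alternatively one can verify the weakly-ordered axioms of Theorem~\ref{ltrivial} directly for $\mathcal{M}^{T}$ using $\supp$ and $\des$ inherited componentwise.

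Next I would identify the data for Theorem~\ref{theorem: rmonoid}. The support semilattice factors as \[L^{\mathcal{M}^{T}}\;=\;\prod_{i=1}^{k}\mathcal{U}(P_{v_i})\ \times\!\!\prod_{\substack{v\in I\\ \mathrm{d}(v)\neq d-1}}\!\!\partition(C_v),\] where $\mathcal{U}(P_{v_i})$ is the lattice of upsets of $P_{v_i}$ (join $=$ union) and $\partition(C_v)$ carries the common-refinement join. Under this identification an index $(S,\alpha)$ is exactly an element of $L^{\mathcal{M}^{T}}$, with $S_i=S\cap P_{v_i}$ and $\alpha=(\alpha_v)$, and the support of a generator is $\supp(\widehat{\partial}_{E})=\big((\langle C^{E}_{v_i}\rangle)_i,(\beta^{E}_v)_v\big)$. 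Thus the condition $\supp(\widehat{\partial}_{E})\preceq(S,\alpha)$ splits into the leaf condition $C^{E}_{v_i}\subseteq S_i$ for all $i$ (equivalently $E\subseteq S$, since each $S_i$ is an upset and $C^{E}_{v_i}=E\cap C_{v_i}$) and the inner condition that $\alpha_v$ refine $\beta^{E}_v$ for all such $v$ (equivalently $E$ is $\alpha$-compatible). Summing $x_E$ over these $E$ in~\eqref{rev} yields precisely $\lambda_{(S,\alpha)}$.

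Finally, because $L^{\mathcal{M}^{T}}$ is a direct product, its Möbius function factors as a product and $c_{(S,\alpha)}=\prod_i c_{S_i}\prod_v c_{\alpha_v}$; substituting into~\eqref{rmult} makes $m_{(S,\alpha)}=\big(\prod_i m_{S_i}\big)\big(\prod_v m_{\alpha_v}\big)$. For a leaf factor, formula~\eqref{rmult} is exactly the one evaluated in~\cite{ayyer2014combinatorial}, giving the derangement number $m_{S_i}=d_{S_i}$ of~\eqref{equation2}; for an inner factor it is the move-to-back computation of~\cite{bjorner2009note, brown2000semigroups}, giving $m_{\alpha_v}=\prod_{B\in\alpha_v}(|B|-1)!$, which is~\eqref{equation3}. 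That these are all eigenvalues follows from Theorem~\ref{theorem: rmonoid} together with the dimension check $\sum_{(S,\alpha)}m_{(S,\alpha)}=\prod_i|\mathcal{L}(P_{v_i})|\prod_v|C_v|!=|\mathcal{L}(T)|$, using $\sum_{S_i}d_{S_i}=|\mathcal{L}(P_{v_i})|$ and $\sum_{\alpha_v}\prod_{B}(|B|-1)!=|C_v|!$. The main obstacle I anticipate is the bookkeeping for the \emph{submonoid} $\mathcal{M}^{T}$: one must check that its support map is onto the full product $L^{\mathcal{M}^{T}}$ and that its chambers (maximal elements under $\geq_{\mathcal{R}}$) are products of the componentwise chambers, so that $c_{(S,\alpha)}$ genuinely factors. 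This is where the tree structure enters --- one shows each local support can be driven independently by choosing an admissible $E$ that selects a prescribed set of $E$-related children at a node while respecting the no-siblings constraint --- and it is also the step at which the order conventions of the two semilattices must be aligned with~\eqref{rev} and~\eqref{rmult}.
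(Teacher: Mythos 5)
Your skeleton is the paper's: decompose each $\widehat{\partial}_E$ into extended promotions at the depth-$(d-1)$ nodes and pop shuffles at the higher inner nodes, get $\mathcal{R}$-triviality from~\cite{ayyer2014combinatorial} and the left-regular band $\partition^{\mathrm{ord}}(T\setminus L)$, apply Theorem~\ref{theorem: rmonoid}, and factor the M\"obius/chamber computation into the known component results. The gap is in the step you yourself flag as the main obstacle, and your proposed resolution of it fails. You cannot show that the support map of the submonoid $\mathcal{M}^T$ is onto the product semilattice, because it is not: local supports cannot be driven independently. Any $E\in\mathcal{A}(L)$ with $E\cap C_{v_i}\neq\emptyset$ makes $v_i$ and all its ancestors $E$-related, so a nontrivial promotion support at $v_i$ forces $\beta^E_w$ to be a genuine two-block partition at every ancestor $w$ of $v_i$ --- unless all of $w$'s children are $E$-related, which in turn forces nontrivial supports at other leaf nodes. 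For instance, take the tree of Figure~\ref{treeex1} but replace $P_6$ by a two-element antichain $\{4,4'\}$: no join of generator supports equals $(\{2\},\,\emptyset,\,\{\{5,6\}\})$, even though this element of the product semilattice carries a nonzero multiplicity $d_{\{2\}}d_{\emptyset}m_{\{56\}}=1$ in the theorem. (The Remark after Theorem~\ref{theorem: main2} records that the embedding $\mathcal{M}^T\hookrightarrow\mathcal{M}_{v_1}\times\cdots\times\mathcal{M}_{v_k}\times\partition^{\mathrm{ord}}(T\setminus L)$ is proper in general.) Since your multiplicity computation relies on the semilattice being a product so that $\mu$ and $c_{(S,\alpha)}$ factor, this is not mere bookkeeping.

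The fix is the one the paper uses: do not work with $\mathcal{M}^T$ at all, but apply Theorem~\ref{theorem: rmonoid} to the full product monoid $\mathcal{M}=\mathcal{M}_{v_1}\times\cdots\times\mathcal{M}_{v_k}\times\partition^{\mathrm{ord}}(T\setminus L)$ with the probability distribution extended by zero on generators not of the form $(\widehat{\partial}_E,\underline{\alpha}^E)$, as in~\eqref{redifineprob}; the theorem only requires a distribution on the monoid, and for the product the semilattice, M\"obius function, and chamber counts factor automatically. With that replacement your argument goes through, except for one further point you elide: $L^{\mathcal{M}_{v_i}}$ is not the full lattice of upsets of $P_{v_i}$ but only the set of upsets of the form $\mathrm{Rfactor}(x)$, while the theorem indexes eigenvalues by all upsets $S$. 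You therefore still owe the check that $d_{S_j}=0$ whenever $S_j$ is an upset not of this form, which the paper obtains from the proof of Theorem~5.2 of~\cite{ayyer2014combinatorial}. The remaining pieces of your proposal --- unwinding $\supp(\widehat{\partial}_E)\preceq(S,\alpha)$ into $E\subseteq S$ plus $\alpha$-compatibility, and the identities $\sum_{S_i'\geq S_i}\mu(S_i,S_i')c_{S_i'}=d_{S_i}$ and $\sum_{\beta\geq\alpha}\mu(\alpha,\beta)c_{\beta}=m_\alpha$ --- coincide with the paper's.
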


The product in~\eqref{equation3} is over the blocks $B$ of the partitions $\alpha_{v}$.


\begin{exa}\label{ex: tree with poset structure}

Consider the tree $T$ with the leaf posets in Figure~\ref{treeex1}. The lattices of upsets of $P_5$ and $P_6$ are given in Figure~\ref{treeseigenvalues}. Using formula~\eqref{equation2} we get $d_{S_5} $ is $1$ if  $S_5 \in \{\emptyset, 2, 123\}$ and it is  0 otherwise. Similarly, $d_{S_6} $ is $1$  if  $S_6 = 4$ and  it is 0 otherwise. So, $d_{S_{5}}d_{S_{6}}$ is $1$ if  $S \in \{4, 24, 1234\}$ and 0 otherwise. Furthermore, ~\eqref{equation3} gives $m_{\{5,6\}} = 1$ and $m_{\{56\}} = 1$. Thus, the eigenvalues of the transition matrix  are 
%
 \begin{align*}
\lambda_{(1234,\{56\})} &= x_{14} + x_{24} + x_{34}+ x_\emptyset,  \\
\lambda_{(1234, \{5,6\})} &= x_{14} + x_{24} + x_{34} + x_1 + x_2 + x_3 + x_4+ x_\emptyset, \\ 
\lambda_{(24, \{56\})} &= x_{24}+ x_\emptyset, \\
  \lambda_{(24, \{5,6\})} &= x_{24} + x_2 + x_4+ x_\emptyset, \\
\lambda_{(4, \{56\})} &= x_\emptyset, \\
  \lambda_{(4, \{5,6\})} &= x_4+ x_\emptyset,
\end{align*} 
all with multiplicity 1. 
\begin{figure}[h]
\[\scalebox{0.5}{$\xymatrix{ & 123\ar@{-}[dr]\ar@{-}[dl] & & &  \\ 12 \ar@{-}[dd] & & 23 \ar@{-}[ddll]\ar@{-}[dd] & &  4\ar@{-}[dd] \\  & & & \hspace{3cm} &  \\ 2 \ar@{-}[dr] & & 3\ar@{-}[dl] & &  \emptyset \\ & \emptyset & & &  }$}\]
\caption{The lattices of upsets  for the posets $P_5$ and $P_6$ in Figure~\ref{treeex1}.}
\label{treeseigenvalues}
\end{figure}
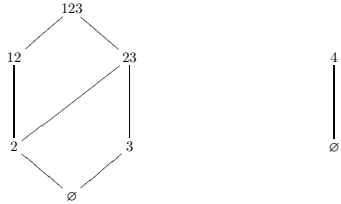

\end{exa}

\begin{proofof}{Theorem \ref{theorem: main1}} For each vertex $v$ of depth $d-1$, let $\mathcal{M}_{v}$ be the monoid generated by the transformations on $\mathcal{P_{v}}$ induced by $\{\widehat{\partial}_j :\ j \in C_{v}\}$.   Ayyer et al.~\cite{ayyer2014combinatorial} proved that when the poset on $C_{v}$ is a rooted forest, $\mathcal{M}_{v}$ is $\mathcal{R}$-trivial with an associated semilattice $L^{\mathcal{M}_{v}}$. The monoid generated by the pop shuffles on the set \[ \bigoplus_{\substack{v \in I \\ d(v) \neq d-1}} \mathcal{L}(C_{v})\] is precisely $\partition^{\text{ord}}(T\setminus L)$, which is a left-regular band and therefore $\mathcal{R}$-trivial with an associated semilattice  $\partition(T\setminus L)$~\cite{bjorner2009note}.  The support map is defined by \begin{align*}\supp \colon \mathrm{Part}^{\text{ord}}(T\setminus L) &\rightarrow \partition(T \setminus L) \\ 
\underline{\alpha} &\mapsto \alpha.\end{align*}

Thus, if $T$ has $k$ vertices of depth $d-1$, $v_{1}, \ldots, v_{k}$, the monoid $\mathcal{M}^{T}$ generated by the transformations of $\mathcal{L}(T)$ induced by the moves $\widehat{\partial}_{E}$ is a submonoid of \[\mathcal{M} = \mathcal{M}_{v_1} \times \cdots \times  \mathcal{M}_{v_k} \times \mathrm{Part}^{\text{ord}}(T\setminus L).\]  We can think of acting with the larger monoid $\mathcal{M}$ with the probabilities set to be

\begin{equation} \label{redifineprob} \mathrm{Prob}(y, \underline{\alpha}) = \begin{cases} \displaystyle \sum_{ y= \widehat{\partial}_E, \underline{\alpha} = \underline{\alpha}^E} x_E  & \text{ if there is some $E \in \mathcal{A}(L)$ such that } y= \widehat{\partial}_E \text{ and } \underline{\alpha} = \underline{\alpha}^E \\ 0 & \text{ otherwise. } \end{cases} \end{equation} Then $\mathcal{M}^{T}$ is the submonoid of $\mathcal{M}$ generated by the support of $\mathrm{Prob}$. It is not difficult to see that $\mathcal{M}^{T}$ contains an element that acts as a constant map. Since the action of $\mathcal{M}$ on $\mathcal{L}(T)$ is faithful, the minimal ideal of $\mathcal{M}^{T}$ is canonically in bijection with $\mathcal{L}(T)$ and, moreover, that bijection is an isomorphism of the action of $\mathcal{M}^{T}$ on the left of the minimal ideal with the action of $\mathcal{M}^{T}$ on $\mathcal{L}(T)$ (see for example Remark 2.8 in~\cite{ayyer2015markov}). 

As a product of $\mathcal{R}$-trivial monoids, $\mathcal{M}$ is also $\mathcal{R}$-trivial with an associated semilattice \[L^{\mathcal{M}} =  L^{\mathcal{M}_{v_1}} \times \cdots \times L^{\mathcal{M}_{v_k}} \times \partition(T \setminus L).\]

The semilattice $L^{\mathcal{M}_{v}}$ was described in~\cite{ayyer2014combinatorial} for the case when the poset on $C_{v}$ is a rooted forest as follows. For $x \in \mathcal{M}_{v}$, the image of $x$ is $\text{im}(x) = \{x \pi : \pi \in \mathcal{P_{v}}\}$. Let $\text{rfactor}(x)$ be the largest common right factor of all $\pi \in \text{im}(x)$. In other words, for $\pi \in \text{im}(x)$, $\pi = \pi'\text{rfactor}(x)$ and there is no bigger $\text{rfactor}(x)$ such that this is true. Let $\text{Rfactor}(x) = \{j : \ j \in \text{rfactor}(x)\}$. The support map is defined by \begin{align*} \supp : \mathcal{M}_{v} & \rightarrow L^{\mathcal{M}_{v}} \\ x &\mapsto \text{Rfactor}(x^{\omega}), \end{align*} where $x^{\omega}$ is such that $x^{\omega}x = x^{\omega}$ is idempotent. The $\mathcal{R}$-triviality of $\mathcal{M}_{v}$ guarantees that $x^{\omega}$ exists and is idempotent. 

The support map $\supp: \mathcal{M} \rightarrow L^{\mathcal{M}}$ is now taken component-wise. The sets $\text{Rfactor}(x)$ are upsets in the associated posets. So, we need to show that when $S$ is an upset which is not of the form $\text{Rfactor}(x)$ and thus not in $L^{\mathcal{M}}$, the multiplicity $m_{(S, \alpha)} = d_{S_1} \cdots d_{S_k} m_\alpha$ is $0$. Otherwise, we need to check that both the formulas for the eigenvalues and their multiplicities in Theorem~\ref{theorem: main1} and Theorem~\ref{theorem: rmonoid} match.

First, let $(S, \alpha)$ be such that $S$ is an upset of $P$ that is not $\text{Rfactor}(x)$ for any $x \in \mathcal{M}_{v_1} \times \cdots \times  \mathcal{M}_{v_k}$. Then there exists a component of $S$, say $S_j$, which is not of the form $\text{Rfactor}(x)$ for any $x$ in $\mathcal{M}_{v_{j}}$. By the proof of Theorem~5.2 in~\cite{ayyer2014combinatorial},  $d_{S_j} = 0$. Thus, $m_{(S, \alpha)} = 0$.

Now let $(S, \alpha) = (S_{1}, \ldots, S_{k}, \alpha) \in L^{\mathcal{M}}$. For an element $(y_{1}, \ldots, y_{k}, \underline{\beta}) \in \mathcal{M}$, $\supp(y_{1}, \ldots, y_{k}, \underline{\alpha}) \leq (S_{1}, \ldots, S_{k}, \alpha)$ if and only if $\supp(y_{i}) \leq S_{i}$, $1 \leq i \leq k$, and $\supp(\underline{\beta}) \leq \alpha$. The latter is true if and only if $\alpha$ is a refinement of $\beta$. The probability $x_{(y_{1}, \ldots, y_{k}, \underline{\beta})}$ is zero unless each $y_{i}$ is of  the form $\widehat{\partial}_{j_{i}}$ for some $j_{i} \in C_{v_{i}}$. But from~\cite{ayyer2014combinatorial} we know that $\supp(\widehat{\partial}_{j_{i}}) \leq S_{i}$ if and only if $j_{i} \in S_{i}$. So, the only nonzero summands in~\eqref{rev} correspond to $\widehat{\partial}_{E}$ such that $E \subseteq S$ is an element of $\mathcal{A}(L)$ and $\alpha$ is a refinement of $\beta^{E}$. This is precisely the definition of $\alpha$-compatible.

The multiplicity in this case, by~\eqref{rmult}, is
\begin{equation}  \label{msa1} m_{(S, \alpha)} = \sum_{\substack{S' \geq S \\ \alpha' \geq \alpha}} \mu((S, \alpha), (S', \alpha')) c_{(S', \alpha')}.\end{equation}
The M\"obius function and the number of maximal elements are both multiplicative. Thus 
\begin{equation} \label{msa2} m_{(S, \alpha)}  =   \sum_{\beta \geq \alpha}\mu(\alpha, \beta) c_{\beta} \prod_{i=1}^k \left( \sum_{S'_i \geq S_i} \mu(S_i, S'_i) c_{S'_i}\right).\end{equation} In the analysis of the extended promotion Markov chain~\cite{ayyer2014combinatorial} it's proved that 
\begin{equation} \label{msa3} \sum_{T_i \geq S_i} \mu(S_i, T_i) c_{T_i} = d_{S_i}.\end{equation} On the other hand, in the analysis of the move-to-front scheme on trees~\cite{bjorner2009note}, Bj\"orner showed that by applying Theorem 1 of Brown~\cite{brown2000semigroups}, one gets  \begin{equation} \label{msa4} \sum_{\beta \geq \alpha} \mu(\alpha, \beta) c_\beta = m_\alpha.\end{equation} Substituting~\eqref{msa3} and~\eqref{msa4} into~\eqref{msa2} yields $m_{(S, \alpha)} = d_{S_1} \cdots d_{S_k} m_\alpha$.

\end{proofof}

Our second result in this section gives a description of the eigenvalues of the transition matrix $M^{T}$ when each of the leaf posets is a disjoint union of an ordinal sum of a forest and a ladder. More precisely, let the set of vertices of depth $d-1$ (parents of leaves) in $T$ be  $\{v_{1}, \cdots, v_{k}\}$ and let each leaf poset be of the form \[P_{v_i} = F^{i}_1 \oplus L^{i}_1 + \cdots + F^{i}_{k_{i}} \oplus L^{i}_{k_{i}}\] where each $F^{i}_j$ is a rooted forest and $L^{i}_j$ is a ladder. As in the proof of Theorem~\ref{theorem: main1}, let $M_{v_{i}}$, $1 \leq i \leq k$ be the monoids generated by the transformations on $\mathcal{L}(P_{v_{i}})$ induced by the extended promotion operators. The eigenvalues in the following theorem are described in terms of the algebraic structure of the product $\mathcal{M}_{v_1} \times \cdots \times \mathcal{M}_{v_k}$.

\begin{thm}\label{theorem: main2}
Let $T$ be as described in the previous paragraph and let $M^{T}$ be the transition matrix of the random walk on $\mathcal{L}(T)$ \[M(\pi, \pi') = \sum_{E : \widehat{\partial}_E \pi = \pi'} x_E\] for $\pi, \pi' \in \mathcal{L}(T)$. 
Choose a maximal subgroup $H_J$ of each $\mathcal{J}$-class $J$ of the  monoid $\mathcal{M}_{v_1} \times \cdots \times \mathcal{M}_{v_k}$ with identity $e_J$, and let $\alpha \in \partition(T \setminus L )$. The eigenvalues of $M^{T}$ are given by 
\begin{equation} \label{evdoabours} \lambda_{(J, \chi, \alpha)} = \sum_{\substack{E \in \mathcal{A}(L) \\ \widehat{\partial}_E \geq_{\mathcal{J}} J \\ E \text{ is } \text{$\alpha$-compatible}}} x_E \chi(e_J \widehat{\partial}_E e_J),\end{equation}  for each irreducible character $\chi$ of $H_J$. The multiplicity of $\lambda_{(J, \chi, \alpha)}$ may be zero; however, there are no other eigenvalues. 

\end{thm}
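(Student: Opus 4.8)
The plan is to run the argument of Theorem~\ref{theorem: main1} with the $\mathcal{R}$-trivial input (Theorem~\ref{theorem: rmonoid}) replaced by the $\textbf{DO(Ab)}$ input (Theorem~\ref{theorem: doab}). As in that proof I would embed $\mathcal{M}^{T}$ into the product
\[\mathcal{M} = \mathcal{M}_{v_1} \times \cdots \times \mathcal{M}_{v_k} \times \mathrm{Part}^{\text{ord}}(T\setminus L),\]
let $\mathcal{M}$ act on $\mathcal{L}(T)$, and weight the generator $(\widehat{\partial}_{E_{v_1}}, \dots, \widehat{\partial}_{E_{v_k}}, \underline{\beta}^{E})$ by $x_{E}$ through~\eqref{redifineprob}. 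Exactly as for Theorem~\ref{theorem: main1}, $\mathcal{M}$ contains a constant map and acts faithfully, so its minimal left ideal $L$ is canonically in bijection with $\mathcal{L}(T)$ and the chain $M^{T}$ is the left random walk on $L$ driven by these weights (for a generating set of $\mathcal{M}$ containing the move-images, the remaining generators carrying weight zero).

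The first substantive point is that $\mathcal{M} \in \textbf{DO(Ab)}$. The factor $\mathrm{Part}^{\text{ord}}(T\setminus L)$ is a left-regular band, hence a band: every element is idempotent, so every $\mathcal{J}$-class is a regular orthodox semigroup and every maximal subgroup is trivial, and therefore it lies in $\textbf{DO(Ab)}$. The essential input is that each $\mathcal{M}_{v_i} \in \textbf{DO(Ab)}$ when $P_{v_i}$ is a direct sum of ordinal sums of a forest and a ladder; this I would establish (or invoke from the preceding lemma) by analyzing the promotion monoid and checking that its regular $\mathcal{J}$-classes are orthodox with abelian maximal subgroups, the abelian part arising from the commuting order-two swaps contributed by the antichains of the ladder summands. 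Granting these, $\textbf{DO(Ab)}$ is closed under finite direct products: by Theorem~\ref{complexrep} each factor has a faithful representation by upper-triangular matrices, and the block-diagonal sum of such representations is again faithful and upper-triangular, so $\mathcal{M} \in \textbf{DO(Ab)}$.

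I would then apply Theorem~\ref{theorem: doab} to $S = \mathcal{M}$ and its minimal left ideal $L$. The structural fact that drives the indexing is that, for finite monoids, the $\mathcal{J}$-classes of a direct product are the products of the $\mathcal{J}$-classes of the factors, and a product class is regular precisely when each factor class is. Hence a regular $\mathcal{J}$-class of $\mathcal{M}$ is a pair $(J, \alpha)$, with $J$ a regular $\mathcal{J}$-class of $\mathcal{M}_{v_1}\times\cdots\times\mathcal{M}_{v_k}$ and $\alpha$ ranging over the $\mathcal{J}$-classes of the band $\mathrm{Part}^{\text{ord}}(T\setminus L)$, which are exactly the elements of $\partition(T\setminus L)$. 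Because the band factor has trivial maximal subgroups, the maximal subgroup of $(J,\alpha)$ is $H_J$ and its irreducible characters are those of $H_J$, while a chosen idempotent is $e_{(J,\alpha)} = (e_J, e_\alpha)$.

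It remains to specialize~\eqref{evdoab}. For a move-image $x$ arising from $E$, the condition $x \geq_{\mathcal{J}} (J,\alpha)$ separates into $\widehat{\partial}_E \geq_{\mathcal{J}} J$ in $\mathcal{M}_{v_1}\times\cdots\times\mathcal{M}_{v_k}$ and $\underline{\beta}^{E} \geq_{\mathcal{J}} \alpha$ in the band; via the support description of the $\mathcal{J}$-order on $\mathrm{Part}^{\text{ord}}(T\setminus L)$ the second condition says that $\alpha$ refines $\beta^{E}_{v}$ for every internal $v$, i.e.\ that $E$ is $\alpha$-compatible. In the character factor, $e_\alpha \underline{\beta}^{E} e_\alpha = e_\alpha$ contributes $1$ through the trivial character of the trivial group, so $\chi(e_{(J,\alpha)} x e_{(J,\alpha)}) = \chi(e_J \widehat{\partial}_E e_J)$; with $w_x = x_E$ this is exactly~\eqref{evdoabours}, and transporting the conclusion of Theorem~\ref{theorem: doab} through the bijection $L \cong \mathcal{L}(T)$ shows these are all the eigenvalues (with some multiplicities possibly zero). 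The main obstacle is the $\textbf{DO(Ab)}$ membership of the $\mathcal{M}_{v_i}$: the product and restriction steps are formal, but determining the regular $\mathcal{J}$-classes of the promotion monoid of an ordinal sum involving a ladder, and confirming that their maximal subgroups are abelian, is where the real work lies.
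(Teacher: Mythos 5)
Your overall architecture coincides with the paper's: embed $\mathcal{M}^T$ into the product $\mathcal{M} = \mathcal{M}_{v_1}\times\cdots\times\mathcal{M}_{v_k}\times\mathrm{Part}^{\mathrm{ord}}(T\setminus L)$, reweight the generators via~\eqref{redifineprob}, identify the chain with the left random walk on the minimal left ideal, and then specialize Theorem~\ref{theorem: doab} using the facts that $\mathcal{J}$-classes and maximal subgroups of a direct product factor componentwise, that the band factor has trivial maximal subgroups indexed by $\partition(T\setminus L)$, and that $\underline{\beta}^{E}\geq_{\mathcal{J}} J_{\alpha}$ is exactly $\alpha$-compatibility. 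All of that part of your argument is correct and is what the paper does.

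The genuine gap is the one you flag yourself: the membership $\mathcal{M}_{v_i}\in\mathbf{DO(Ab)}$ is asserted rather than proved, and your proposed route --- determining the regular $\mathcal{J}$-classes of the promotion monoid of $F^{i}_1\oplus L^{i}_1+\cdots+F^{i}_{k_i}\oplus L^{i}_{k_i}$ and checking directly that they are orthodox with abelian maximal subgroups --- is left unexecuted and would be considerably harder than what is actually needed. The paper closes this step differently: it cites the proof of Theorem~5 of~\cite{poznanovic2017properties}, which exhibits a single complex matrix simultaneously upper-triangularizing all the generators $G_j$ of $\mathcal{M}_{v_i}$; this is a faithful complex representation by upper triangular matrices, so condition~(4) of Theorem~\ref{complexrep} gives $\mathcal{M}_{v_i}\in\mathbf{DO(Ab)}$ with no analysis of $\mathcal{J}$-classes at all. (Your closure-under-products argument via block-diagonal upper-triangular faithful representations is fine and is essentially what the paper leaves implicit.) As written, the proposal is therefore a correct reduction of the theorem to the unproved claim that the promotion monoids of these leaf posets lie in $\mathbf{DO(Ab)}$; to complete it you should either import the triangularization result or actually carry out the $\mathcal{J}$-class computation you sketch.
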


\begin{exa}\label{ex: tree with poset structure2}

Consider the tree $T$ with the leaf posets given as in Figure~\ref{treewithposet3}. $T$ has two vertices at depth $d-1$: $5$ and $6$. 
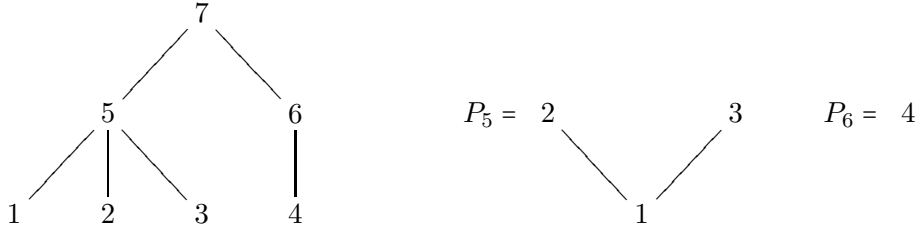
\begin{figure}[h]
\begin{center}
\scalebox{1}{$$\xymatrix{  & & 7\ar@{-}[dr]\ar@{-}[dl]&  & & & \\ & 5\ar@{-}[d]\ar@{-}[dr]\ar@{-}[dl]& & 6\ar@{-}[d]  & & P_5 = \hspace{-.3in}& 2\ar@{-}[dr] & & 3\ar@{-}[dl] & P_6 = \hspace{-.3in}& 4 \\ 1& 2& 3& 4 & & & & 1 & & & & }  $$} 
\end{center}
\caption{A rooted tree with a ladder as the leaf poset.}
\label{treewithposet3}
\end{figure}
The monoid $\mathcal{M}_{5} \times \mathcal{M}_{6}$ has two $\mathcal{J}$-classes: $J = \{(\widehat{\partial}_1, \widehat{\partial}_4), (\widehat{\partial}_\emptyset, \widehat{\partial}_4)\}$ and  $J'=\{(\widehat{\partial}_2, \widehat{\partial}_4), (\widehat{\partial}_3, \widehat{\partial}_4)\}$. Note that on the second component $\widehat{\partial}_\emptyset = \widehat{\partial}_4$. The corresponding maximal subgroups are $H_{J} = J$ and $H_{J'}= \{(\widehat{\partial}_2, \widehat{\partial}_4)\}$ with identity elements $e_J = (\widehat{\partial}_\emptyset, \widehat{\partial}_4)$ and $e_{J'} = (\widehat{\partial}_2, \widehat{\partial}_4)$, respectively.
The character tables for $H_J$ and $H_{J'}$ are
\begin{center}
\begin{tabular}{c| c c }
&  $(\widehat{\partial}_\emptyset, \widehat{\partial}_4) $& $(\widehat{\partial}_1, \widehat{\partial}_4)$\\
\hline
$\chi^{(1)}$ & $1$ & $1$ \\
 $\chi^{(2)}$& $1$  & $-1$\\ 
\end{tabular}
\qquad \qquad \qquad
\begin{tabular}{c| c }
&   $(\widehat{\partial}_2, \widehat{\partial}_4)$\\
\hline
$\chi^{(1)}$ & $1$
\end{tabular}.
\end{center}

The inequality $\widehat{\partial}_E \geq_{\mathcal{J}} J$ is satisfied for $E \in \{\emptyset, \{1\}, \{4\}, \{1,4\}\}$ while $\widehat{\partial}_E \geq_{\mathcal{J}} J'$ for all $E \in \mathcal{A}(L)$. Based on the requirement to be $\alpha$-compatible, the possible eigenvalues are 
\begin{align*}
\lambda_{(J, \chi^{(1)}, \{56\})} &= x_{14} +x_\emptyset \\
\lambda_{(J, \chi^{(1)}, \{5,6\})} &=x_{14} + x_1 + x_4 + x_\emptyset  \\
\lambda_{(J, \chi^{(2)}, \{56\})} &=  -x_{14} + x_\emptyset \\
\lambda_{(J, \chi^{(2)}, \{5,6\})} &=  - x_1 - x_{14} + x_4  + x_\emptyset \\ 
\lambda_{(J', \chi^{(1)}, \{56\})} &= x_{14} + x_{24} + x_{34} + x_\emptyset \\ 
\lambda_{(J', \chi^{(1)}, \{5,6\})} &= x_{1} + x_2 + x_3 + x_4 + x_{14} + x_{24} + x_{34} + x_\emptyset.
\end{align*}
Based on formula~\eqref{mdoab}, the multiplicities of  $\lambda_{(J, \chi^{(1)}, \{56\})}$ and $\lambda_{(J, \chi^{(1)}, \{5,6\})}$ are actually $0$ while the remaining four eigenvalues have multiplicity one.

\end{exa}

\begin{proofof}{Theorem \ref{theorem: main2}}

As in the proof of Theorem~\ref{theorem: main1}, it is equivalent to consider the Markov chain given by the actions of the elements in the monoid $\mathcal{M} = \mathcal{M}_{v_1} \times \cdots \times  \mathcal{M}_{v_k} \times \mathrm{Part}^{\text{ord}}(T\setminus L)$ with probabilities given by~\eqref{redifineprob}. Recall that $\mathcal{M}_{v_{i}}$ is the monoid generated by the transformations on $C_{v_{i}}$ induced by the extended promotion operators, or equivalently, the matrices $G_{j}$ given by substituting all indeterminants but $x_{j}$ in the transition matrix of the extended promotion Markov chain on $\mathcal{L}(C_{v_{i}})$ by 0 and setting $x_{j} =1$. By the proof of Theorem 5 of the authors in~\cite{poznanovic2017properties}, when the leaf posets are as assumed, there exists a matrix $U$ that simultaneously upper-triangularizes all $G_j$, which implies that $\mathcal{M}_{v_{i}}$ is in \textbf{DO(Ab)} by Theorem~\ref{complexrep}. Therefore, $\mathcal{M}$ is also in \textbf{DO(Ab)}.

So, one can use Theorem~\ref{theorem: doab} to find the desired eigenvalues. A $\mathcal{J}$-class of a direct product is a direct product of $\mathcal{J}$-classes. The analogous statement is true for the maximal subgroups. Two ordered partitions of $T \setminus L$, $\underline{\alpha}$ and $\underline{\beta}$, are in the same $\mathcal{J}$-class of $\mathrm{Part}^{\text{ord}}(T\setminus L)$ if their underlying set partitions $\alpha$ and $\beta$ are equal. So, the $\mathcal{J}$-classes of $\mathrm{Part}^{\text{ord}}(T\setminus L)$ are indexed by set partitions. Since $\mathrm{Part}^{\text{ord}}(T\setminus L)$ is right-regular band, the maximal subgroup of such a $\mathcal{J}$-class, $J_\alpha$ is trivial, and can be thought of as $H_{J_{\alpha}} = \{\underline{\alpha}\}$. So, the characters in~\eqref{complexrep} reduce to characters of the maximal subgroups for $\mathcal{M}_{v_1} \times \cdots \times  \mathcal{M}_{v_k}$. Now, similarly as in the proof of Theorem~\ref{theorem: main1}, $\underline{\beta}^{E} \geq J_{\alpha}$ if and only if $E$ is $\alpha$-compatible and we get~\eqref{evdoabours}.

\end{proofof}

\begin{rem} In the proofs of Theorem~\ref{theorem: main1} and~\ref{theorem: main2} we used that  the monoid $\mathcal{M}^T$ generated by the transformations on $\mathcal{L}(T)$ induced by $\widehat{\partial}_{E}$, $E \in \mathcal{A}(L)$ embeds into $\mathcal{M} = \mathcal{M}_{v_1} \times \cdots \times \mathcal{M}_{v_k} \times \mathrm{Part}^{\text{ord}}(T\setminus L)$. While the embedding is in general proper, $\mathcal{M}^T$ and $\mathcal{M}$ are equal if the leaf posets are rooted trees. To see this, consider a generator $(\widehat{\partial}_E, \underline{\alpha}^F)$ of $\mathcal{M}$, where $F$ is also an element of $\mathcal{A}(L)$ and $\underline{\alpha}^F  \in \mathrm{Part}^{\text{ord}}(T\setminus L)$  whose component $\underline{\alpha}_{v}^F$ at an inner vertex $v$ of $T \setminus L$ is the one- or two-block ordered partition of $C_{v}$ with the last block consisting of the $F$-related children of $v$. (The fact that one- and two-block partitions generate  $\mathrm{Part}^{\text{ord}}(T\setminus L)$ follows for example from the discussion in~\cite{bjorner2009note}.) Let $r_{i}$ be the root of the poset $P_{v_{i}}$ on the set of leaves $C_{v_{i}}$, $1 \leq i \leq k$. Let
\[G = \{ v \in L : v \in (E \cap C_{v_{i}}) \setminus F\} \cup \{r_{i} : E \cap C_{v_{i}} = \emptyset \text{ or } |F \cap E \cap C_{v_{i}}| = 1\}. \]
By construction, $|G \cap C_{v_{i}}| = 1$ for every $i \in \{1, \cdots, k\}$ and hence every inner node of $T$ is $G$-related and $\underline{\alpha}^{G}$ acts as an identity on the linear extensions of the inner nodes of $T$.  Let
\[H  = \bigcup_{|F\cap E \cap C_{v_{i}}| =1} (F\cap E \cap C_{v_{i}}) \cup \{ r_{i} : |F \cap C_{v_{i}}| =1 \text{ but } F \cap E \cap C_{v_{i}} = \emptyset \}.\]
By construction, $|H \cap C_{v_{i}}| = |F \cap C_{v_{i}}|$ for all $i$, hence $\underline{\alpha}^{H} = \underline{\alpha}^{F}$. Since $r_{v_{i}}$ is the root, the action of $\widehat{\partial}_{r_{i}}$ on $\mathcal{L}(C_{v_{i}})$ is trivial. Based on this, one can now readily see  that
\[(\widehat{\partial}_H, \underline{\alpha}^H) \cdot (\widehat{\partial}_G, \underline{\alpha}^G) = (\widehat{\partial}_E, \underline{\alpha}^F)\] and, consequently, $\mathcal{M} \subseteq \mathcal{M}^{T}$.
\end{rem}


\section{The eigenvalues of $M^{T_{P}}$: a combinatorial treatment}\label{alternate}

While Theorem~\ref{theorem: main1} gives a combinatorial description of the eigenvalues of the transition matrix $M^T$ when the leaf posets are forests, in the more general case when the leaf posets are of the form $F_1 \oplus L_1 + \cdots + F_k \oplus L_k$, for $F_{i}$ a forest and $L_{i}$ a ladder, the description of the eigenvalues in Theorem~\ref{theorem: main2}  is given in terms of the algebraic properties of the underlying monoid. The goal of this section is to give a combinatorial description of the eigenvalues in this more general case. More precisely, we give an algorithm how to compute the eigenvalues (including multiplicities) from which it is clear that the character values that appear in Theorem~\ref{theorem: main2} are all $\pm 1$. The main result in this section is summarized in the following theorem.

\begin{thm}\label{theorem: maindoab}
Let $T$ be as described above and suppose the leaf posets $P_{v_i}$ are all of the form  $P_{v_i} = F^{i}_1 \oplus L^{i}_1 + \cdots + F^{i}_{k_{i}} \oplus L^{i}_{k_{i}}$ where each $F^{i}_j$ is a rooted forest and $L^{i}_j$ is a ladder. The eigenvalues of the transition matrix $M^T$ are linear in the $x_E$'s. Moreover, they can be explicitly computed using the formula for the case when $P_{v_i}$ is a rooted forest (Theorem~\ref{theorem: main1}) and Theorem~\ref{generalfactor}.
\end{thm}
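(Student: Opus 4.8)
The plan is to reduce the general case to the rooted-forest case of Theorem~\ref{theorem: main1} by removing the size-$2$ antichains of the ladders one at a time, using the product structure of the monoid established in the proof of Theorem~\ref{theorem: main2} together with Theorem~\ref{generalfactor}, which compares the eigenvalues of two chains whose leaf posets differ by a single transposition of two fixed books. Note first that the eigenvalue formula~\eqref{evdoabours} is already linear in the $x_E$; the content of the theorem is therefore to show that the coefficients $\chi(e_J \widehat{\partial}_E e_J)$ are all equal to $\pm 1$ and to turn the algebraic description into an explicit recursive computation.

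I would begin from the product decomposition $\mathcal{M} = \mathcal{M}_{v_1} \times \cdots \times \mathcal{M}_{v_k} \times \mathrm{Part}^{\text{ord}}(T\setminus L)$. Since a $\mathcal{J}$-class, its maximal subgroup, and the character factor of~\eqref{evdoabours} all split over this product, it suffices to analyze the subgroups and characters one leaf poset $P_{v_i}$ at a time, and---because $P_{v_i}$ is a direct sum of blocks $F^i_j \oplus L^i_j$---one such summand at a time. The maximal subgroup attached to a summand $F \oplus L$ is a direct product of copies of $\mathbb{Z}/2$, one for each size-$2$ antichain (rung) of the ladder $L$, so its irreducible characters are the sign characters taking values in $\{+1,-1\}$. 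This already forces every coefficient $\chi(e_J \widehat{\partial}_E e_J)$ occurring in~\eqref{evdoabours} to be $\pm 1$, which gives both linearity and integrality of the eigenvalues.

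To make the computation explicit I would induct on the total number $m$ of size-$2$ antichains occurring among all the ladders $L^i_j$. When $m = 0$ every leaf poset is a rooted forest and the eigenvalues are supplied directly by Theorem~\ref{theorem: main1}, giving the base case. For the inductive step, choose one size-$2$ antichain $\{a,b\}$ and let $P'$ be the poset obtained by replacing it with a covering relation making $a$ and $b$ comparable; then $P'$ has $m-1$ size-$2$ antichains and still has the required block form, while its leaf posets differ from those of $P$ by exactly the transposition $s_{ab}$. Theorem~\ref{generalfactor} then expresses the eigenvalues of $M^{T_P}$, with multiplicities, in terms of those of $M^{T_{P'}}$: each eigenvalue of the smaller chain is replaced by the pair obtained from the trivial and the sign character of the newly created $\mathbb{Z}/2$ factor, and on the level of the formula the sign character flips the coefficients of exactly those $x_E$ for which $\widehat{\partial}_E$ acts nontrivially across $\{a,b\}$. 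Iterating this splitting from the forest eigenvalues of the base case produces the announced algorithm and confirms that all character values are $\pm 1$.

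The step I expect to be the main obstacle is checking that the single-transposition comparison of Theorem~\ref{generalfactor}, which is naturally formulated for one leaf poset, lifts cleanly through the full product $\mathcal{M}_{v_1} \times \cdots \times \mathcal{M}_{v_k} \times \mathrm{Part}^{\text{ord}}(T\setminus L)$ and is compatible with the $\alpha$-compatibility condition coupling the promotion components to the pop-shuffle component. Concretely, when $s_{ab}$ lives at a node $v_i$, I would need to verify that the set of $E \in \mathcal{A}(L)$ contributing to a fixed triple $(J,\chi,\alpha)$ is partitioned according to whether $\widehat{\partial}_E$ acts nontrivially across $\{a,b\}$, so that the sign flips prescribed by Theorem~\ref{generalfactor} agree exactly with the change of the character value $\chi(e_J \widehat{\partial}_E e_J)$ between $P$ and $P'$, and that the multiplicity bookkeeping is consistent with~\eqref{mdoab}. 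Once this compatibility is established, linearity and the $\pm 1$ coefficients follow immediately from the $\mathbb{Z}/2$ character tables.
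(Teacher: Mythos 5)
Your third paragraph is exactly the paper's argument: the paper proves Theorem~\ref{theorem: maindoab} by observing that when every ladder is collapsed to a chain the leaf posets are rooted forests, so Theorem~\ref{theorem: main1} applies and (by $\mathcal{R}$-triviality and the explicit eigenvalue formula) $M^{T_P}$ is simultaneously upper-triangularizable and has the upset property; Theorem~\ref{generalfactor} then serves as the inductive step, breaking one covering relation at a time, doubling the eigenvalue list, and preserving both the triangularizability and the upset property so that the iteration can continue down to the desired poset $F_1\oplus L_1+\cdots+F_k\oplus L_k$. Two remarks on the rest of your write-up. First, your opening two paragraphs invert the paper's logic: you assert that the maximal subgroup attached to a summand $F\oplus L$ is a direct product of copies of $\mathbb{Z}/2$ indexed by the rungs of $L$ and deduce the $\pm 1$ coefficients from the resulting character table, but this claim is nowhere proved, and as stated it is imprecise --- different $\mathcal{J}$-classes of $\mathcal{M}_{v_i}$ have different maximal subgroups, some of them trivial, as Example~\ref{ex: tree with poset structure2} shows. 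In the paper the fact that all character values in~\eqref{evdoabours} are $\pm 1$ is a \emph{consequence} of the combinatorial algorithm, not an input to it; your induction in the third paragraph does not actually use the character-theoretic claim, so you should either drop it or supply a proof. Second, the compatibility issue you flag in your last paragraph is not a genuine obstacle: Theorem~\ref{generalfactor} and the operator $\partial_{a,b}$ are formulated directly on the full transition matrix $M^{T_P}$ of the whole tree, and Lemma~\ref{form2} checks that all components $\pi_w$ with $w\neq v$, including the pop-shuffle components, are unaffected by the local change at $v$, so no separate lifting through the product monoid or through the $\alpha$-compatibility condition is required. Also note that in case (A) of Theorem~\ref{generalfactor} the terms $x_{E\cup\{a\}}$ and $x_{E\cup\{b\}}$ are dropped from the second eigenvalue while the terms $x_{E\cup\{k\}}$ with $k\prec a$ are negated, which is slightly different from the uniform ``sign flip across $\{a,b\}$'' you describe.
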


The proofs and definitions in this section are in the same spirit as the proofs and definitions in~\cite{poznanovic2017properties} where we analyzed the promotion Markov chain for posets of the form $F_1 \oplus L_1 + \cdots + F_k \oplus L_k$.  The idea is that if $P=F_1 \oplus L_1 + \cdots + F_k \oplus L_k$ where $F_i$ is a forest and $L_i$ is a ladder, then $P$ can be obtained by starting from a forest in which the upper parts of the tree components are chains and then breaking covering relations in the chains one by one to obtain the desired ladders. Since we will need to compare two Markov chains with the same underlying tree $T$ but different leaf posets $P$ and $P'$, in this section we will denote the corresponding  transition matrices $M^{T_{P}}$ and $M^{T_{P'}}$, respectively.
\begin{exa}
Let $P$ be the poset in Figure~\ref{startingforest}. If only the covering relation $2 \prec 3$ is excluded one obtains the poset $P' = P_{5} + P_{6}$ from Figure~\ref{treewithposet3}. By Theorem~\ref{theorem: main1}, the eigenvalues of $M^{T_P}$ are  \[\lambda_1 = x_{14} + x_{24} + x_{34}+x_\emptyset, \; \; \; \lambda_2= x_{14} + x_{24} + x_{34} + x_1 + x_2 + x_3 + x_4+x_\emptyset.\] As we have seen in Example~\ref{ex: tree with poset structure2}, the eigenvalues of $M^{T_P'}$ are \[\lambda_1=x_{14} + x_{24} + x_{34}+x_\emptyset, \; \; \; \lambda_1'=-x_{14}+x_\emptyset, \; \; \;\lambda_2= x_{14} + x_{24} + x_{34} + x_1 + x_2 + x_3 + x_4+x_\emptyset, \; \; \;  \lambda_2'=x_4+x_\emptyset - x_1 - x_{14}. \]  As we will see below in Theorem~\ref{generalfactor}, the eigenvalues $\lambda_1$ and $\lambda_1'$ of $M^{T_{P'}}$ correspond to the eigenvalue $\lambda_1$ of $M^{T_P}$  and the eigenvalues $\lambda_2$ and $\lambda_2'$ of $M^{T_{P'}}$ correspond to the eigenvalue $\lambda_2$ of $M^{T_P}$. 
\begin{figure}[h]
\[\scalebox{0.7}{\xymatrix{ 3\ar@{-}[d]  & \hspace{-.7in}\bullet & 4 & \hspace{-.7in} \bullet \\ 2\ar@{-}[d]  & \hspace{-.7in} \bullet & & \\ 1 &  \hspace{-.7in}\bullet & & }}\]
\caption{A forest $P$ related to the leaf posets in Figure~\ref{treewithposet3}.} 
\label{startingforest}
\end{figure}
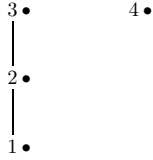
\end{exa}

To formalize the correspondence illustrated in the previous example we introduce some notation. Let $R_{P}$ be the set of all pairs $(a,b)$ for which $P$ can be written in the form  \[P = Q' \oplus a\oplus b \oplus Q''+P_{2}.\] For example, for $P$ in Figure~\ref{startingforest}, $R_P = \{(1,2), (2,3)\}$. Denote by $P' = P \setminus \{(a, b)\}$ the poset obtained by excluding the covering relation $a \prec b$, i.e.,  $P' = Q' \oplus ( a + b ) \oplus Q'' +P_{2}$. For a linear combination $x^{\mathfrak{s}}$ of $x_{E}$'s, we will write $x_{E} \in x^{\mathfrak{s}}$ to denote that $x_{E}$ appears in $x^{\mathfrak{s}}$ with a nonzero coefficient. 

We will say that $M^{T_P}$ has the \emph{upset property} if its characteristic polynomial factors into linear terms, and for each eigenvalue $x^{\mathfrak{s}} = \sum c_{E}^\mathfrak{s}x_{E}$ of $M^{T_{P}}$ and a pair of vertices $(a,b) \in R_{P}$ with common parent $v$, one of the following two conditions holds:
\begin{enumerate}[(A)]
\item  $c^{\mathfrak{s}}_{E \cup \{a\}} = c^{\mathfrak{s}}_{E \cup \{b\}}$ for all $E$ such that $E \cap C_v = \emptyset$, or
\item $c^{\mathfrak{s}}_{E \cup \{k\}} = 0$ for all $k \preceq a$ for all $E$ such that $E \cap C_v = \emptyset$. 
\end{enumerate}


Let $T_P$ and $T_{P'}$ be two trees that have the same underlying structure, but whose leaf posets satisfy $P'= P \setminus \{(a,b)\}$ for some $(a,b) \in R_{P}$. 

 \begin{thm}\label{generalfactor}
Suppose the leaf poset is of the form  $P = Q' \oplus a\oplus b \oplus Q''+P_{2}$ and $a,b \in C_{v}$. Let $P' = P\setminus \{(a, b)\}$. Suppose there is a complex invertible matrix $S$ (independent of the $x_{E}$'s) such that $SM^{T_{P}}S^{-1}$ is upper triangular and $M^{T_P}$ has the upset property. Then $M^{T_{P'}}$ is also uppertriangularizable by a complex matrix and has the upset property. Moreover, for each eigenvalue $x^{\mathfrak{s}} = \sum c_{E}^\mathfrak{s}x_{E}$ of $M^{T_P}$, $M^{T_{P'}}$ has two eigenvalues given by

\[ \begin{cases} x^{\mathfrak{s}}, \; \; \displaystyle \sum_{\substack{k \in C_v \\ k \not\preceq a, b}} \sum_{E} c_{E\cup \{k\}}^{\mathfrak{s}}x_{E\cup \{k\}} + \sum_{E} c_E^{\mathfrak{s}}x_E - \sum_{\substack{k \in C_v \\ k \prec a}} \sum_{E} c_{E\cup \{k\}}^{\mathfrak{s}}x_{E\cup \{k\}}  & \; \;  \text{ if } x^{\mathfrak{s}}, a,b \text{ satisfy property (A)} \\
x^{\mathfrak{s}}, \; \; \displaystyle \sum_{\substack{k \in C_v \\ k \not\preceq a, b}} \sum_{E} c_{E\cup \{k\}}^{\mathfrak{s}}x_{E\cup \{k\}} + \sum_{E} c_E^{\mathfrak{s}}x_E  + \sum_{E} c^{\mathfrak{s}}_{E \cup\{b\}} x_{E \cup \{a\}}  & \; \;  \text{ if } x^{\mathfrak{s}}, a,b \text{ satisfy property (B),}
\end{cases}
\]
where the sets $E$ in the sums above vary over the set $\{ E : E \in \mathcal{A}(L), E \cap C_{v} = \emptyset\}$.

 \end{thm}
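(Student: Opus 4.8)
The plan is to exploit the product structure of the chain at the distinguished vertex $v$ and to compare $\mathcal{L}(T_{P'})$ with $\mathcal{L}(T_{P})$ one ``sheet'' at a time. Because the operators $\widehat{\partial}_E$ act componentwise and, at a leaf-parent $v$, depend only on $E\cap C_v$ with $|E\cap C_v|\le 1$ (no two elements of $E$ are siblings), it suffices to understand how a single-element promotion on the $v$-component changes as the covering relation $a\prec b$ is removed. First I would record that removing this relation exactly doubles the relevant factor: $\mathcal{L}(P')=O\sqcup N$, where $O=\mathcal{L}(P)$ consists of the linear extensions with $a$ before $b$ and $N$ of those with $b$ before $a$, and the poset automorphism $\sigma$ interchanging $a$ and $b$ is an $x_E$-independent bijection $\sigma\colon O\to N$ with $\sigma^2=\mathrm{id}$. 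Tensoring with the (unchanged) action at the remaining inner nodes gives $\mathbb{C}^{\mathcal{L}(T_{P'})}\cong \mathbb{C}^{\mathcal{L}(T_{P})}\oplus \mathbb{C}^{\mathcal{L}(T_{P})}$.

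The second step is to pin down the block form of $M^{T_{P'}}$ in this decomposition. The key combinatorial observation is that the only moves that alter the relative order of $a$ and $b$ are the promotion of $a$ and the promotion of $b$: in $P'$ the element $a$ is incomparable to $b$ and its only strict successors are the elements of $Q''$, so promoting $a$ in an $O$-state always carries $a$ to the right past $b$, landing it in $N$; symmetrically, promoting $b$ in an $N$-state always lands in $O$. Every other move — the empty move and the promotion of a child $k\prec a$, of $k\in Q''$, or of $k$ in the antichain summand $P_2$ — preserves the $a,b$ order and acts on $O$ exactly as the corresponding move acts in $M^{T_{P}}$. Consequently $M^{T_{P'}}$ takes an explicit $2\times 2$ block form whose off-diagonal blocks are the single promotion-of-$a$ and promotion-of-$b$ operators, and whose diagonal $O$-block equals $M^{T_{P}}$ with its promotion-of-$a$ contribution deleted (in $P'$ that move leaves $O$).

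Third, I would produce the triangularizing change of basis. Using $\sigma$ to identify both sheets with $\mathbb{C}^{\mathcal{L}(P)}$ and the equivariance $\widehat{\partial}_E\sigma=\sigma\widehat{\partial}_{\sigma E}$, the promotion-of-$a$ block ($O\to N$) is conjugate to the promotion-of-$b$ block, and a block-unipotent transform — independent of the $x_E$ and built only from $\sigma$ and the ``sort $a$ before $b$'' map — puts $M^{T_{P'}}$ in block upper-triangular form with one diagonal block equal to $M^{T_{P}}$. That block inherits the triangularizer $S$ and contributes, for each eigenvalue $x^{\mathfrak{s}}$ of $M^{T_{P}}$, the eigenvalue $x^{\mathfrak{s}}$ itself. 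The complementary diagonal block $\widetilde{M}$ is then conjugated by $S$ as well, and here the upset property is exactly what is needed to evaluate its triangular diagonal. In case (A) the equality $c^{\mathfrak{s}}_{E\cup\{a\}}=c^{\mathfrak{s}}_{E\cup\{b\}}$ forces the $a$-, $b$-, and $k\prec a$ contributions of $x^{\mathfrak{s}}$ to recombine with the appropriate signs, collapsing to the first displayed partner eigenvalue; in case (B) the vanishing $c^{\mathfrak{s}}_{E\cup\{k\}}=0$ for $k\preceq a$ deletes those terms and the surviving $b$-contribution is relabelled to $a$, giving the second. A final bookkeeping check confirms that the two new families of coefficients again satisfy (A)/(B) against the remaining pairs of $R_{P'}$, so $M^{T_{P'}}$ has the upset property and the theorem can be iterated from a forest up to the full ladder decomposition.

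The step I expect to be the main obstacle is the passage from the second to the third paragraph: proving that the conjugator is genuinely independent of the transition probabilities and that it simultaneously reconstitutes $M^{T_{P}}$ as a diagonal block and renders the complementary block's triangular diagonal computable from the $M^{T_{P}}$-data alone. The delicate point is that promotion of $a$ in $P'$ ``overshoots'' past $b$, so the $O$-diagonal block of $M^{T_{P'}}$ is $M^{T_{P}}$ with its promotion-of-$a$ term removed rather than $M^{T_{P}}$ itself; reconciling this deletion with the off-diagonal promotion-of-$a$ term, and recognizing the upset property as precisely the hypothesis under which the two can be traded for one another, is where the real work lies, closely paralleling the argument of~\cite{poznanovic2017properties}.
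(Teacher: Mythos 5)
Your overall plan---identify $\mathcal{L}(T_{P'})$ with two copies of $\mathcal{L}(T_{P})$ via the involution $\sigma$ swapping $a$ and $b$, write $M^{T_{P'}}$ in $2\times 2$ block form, and invoke the upset property to triangularize---is the same as the paper's, but the ``key combinatorial observation'' in your second paragraph is false, and the error propagates to the final formula. In $P'$ the promotion of an element $k\prec a$ does \emph{not} preserve the relative order of $a$ and $b$: the promotion chain initiated at $k$ bumps its way up through $Q'$ into $a$, and since $a$ is now incomparable to $b$, it carries $a$ past $b$. (Concretely, for $P'=P_5$ of Figure~\ref{treewithposet3} one has $\widehat{\partial}_1(123)=\tau_2\tau_1(123)=132$.) So the moves with $E\cap C_v=\{k\}$, $k\prec a$, swap the two sheets in both directions; they are exactly the class $\mathcal{A}(L)^{I}$ that receives an antidiagonal $2\times2$ block in the operator $\partial_{a,b}$, and they are the source of the term $-\sum_{k\prec a}\sum_E c^{\mathfrak{s}}_{E\cup\{k\}}x_{E\cup\{k\}}$ in the case-(A) eigenvalue. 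With your block structure these terms would enter the second eigenvalue with a $+$ sign, contradicting the statement (compare Example~\ref{ex: tree with poset structure2}, where $\lambda_2'=x_4+x_\emptyset-x_1-x_{14}$). Your sorting of the remaining moves is also off: promotion of $b$ sends both $O$ and $N$ into $O$, promotion of $a$ sends both into $N$, and the two are exchanged by $\sigma$ together with the relabelling $E\cup\{a\}\leftrightarrow E\cup\{b\}$; consequently the $O\to O$ block is $M^{T_P}$ with \emph{both} the promotion-of-$a$ and the promotion-of-$(k\prec a)$ contributions removed.

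A second, independent problem is the order of operations in your third paragraph. A single $x_E$-independent block-unipotent conjugation exhibiting $M^{T_P}$ as a diagonal block of $M^{T_{P'}}$ would require the two block-row sums of $M^{T_{P'}}$ to agree as matrices, which amounts to $\widehat{\partial}_{E\cup\{a\}}$ and $\widehat{\partial}_{E\cup\{b\}}$ being equal as maps on $\mathcal{L}(T_P)$; they are not (promotion of $a$ additionally moves $a$ up to $b$'s former position). The upset property is a statement about the coefficients of the \emph{eigenvalues}, not about the matrices, so it can only be exploited after conjugating by $S\otimes I_2$. This is why the paper first proves that the doubling $\partial_{a,b}$ commutes with $x$-independent conjugation (Lemma~\ref{diagramlemma2}), reduces to the $2\times2$ diagonal blocks $\partial_{a,b}x^{\mathfrak{s}}$ of $\partial_{a,b}U$, and only then triangularizes each block separately, by $\bigl(\begin{smallmatrix}1&0\\-1&1\end{smallmatrix}\bigr)$ in case (A) and trivially in case (B), where the block is already diagonal. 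Both corrections---the sheet-swapping behaviour of the $k\prec a$ moves and the conjugate-by-$S$-first ordering---are needed to recover the stated eigenvalues.
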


Let  $G_{E}$ be the matrix obtained from $M^{T_{P}}$ by setting $x_{E} = 1$ and all other probability parameters 0, i.e., $M^{T_{P}} = \sum x_{E} G_{E}$. When the leaf poset $P$ is a forest,  the transition matrix $M^{T_P}$ satisfies the assumptions of Theorem~\ref{generalfactor}, because the monoid generated by the matrices $G_E$ is $\mathcal{R}$-trivial and the eigenvalues of the transition matrix are supported on the upsets of the tree (Theorem~\ref{theorem: main1}). Thus, by starting with an appropriate forest and repeatedly applying Theorem~\ref{generalfactor}, one can find the eigenvalues in the case when the leaf poset is of the form $F_1 \oplus L_1 + \cdots + F_k \oplus L_k$.

The rest of this section is devoted to the proof of Theorem~\ref{generalfactor} which is based on several lemmas that we prove first. For $\pi \in \mathcal{L}(T_{P})$, let $\hat{\pi} \in \mathcal{L}(T_{P'})$ be the linear extension of $T$ obtained by interchanging $a$ and $b$. Then
\[ \mathcal{L}(T_{P'}) =  \{ \pi,  \hat{\pi} : \pi \in \mathcal{L}(T_{P}) \}.\] the matrices $M^{T_P}$ and $M^{T_{P'}}$ are closely related as described in the following lemma, which is analogous to Lemma~14 in~\cite{poznanovic2017properties}.



\begin{lem}\label{form2}
Let $P = Q' \oplus a\oplus b \oplus Q''+P_{2}$ and let $P' = P\setminus \{(a, b)\}$ be two leaf posets for the tree $T$ where $a,b \in C_v$. Let $G_{T_P}$ and $G_{T_{P'}}$ be the labeled directed graphs that depict the moves in the Markov chains given with the transition matrices $M^{T_P}$ and $M^{T_{P'}}$, respectively. Then 

\begin{enumerate}[(1)]
\item \label{one} If $E \cap C_v = \{k\}$, $k \prec a$ and $\pi \overset{x_{E}}{\rightarrow} \widetilde{\pi}$ in $G_{T_P}$, then $\pi\overset{x_{E}}{\rightarrow} \widehat{\widetilde{\pi}}$ and $\widehat{\pi}\overset{x_{E}}{\rightarrow}\widetilde{\pi}$ in $G_{T_{P'}}$.
\item \label{two} If $E \cap C_v = \emptyset$ or $E \cap C_v = \{k\}$, $k \npreceq a$, $k \npreceq b$ and $\pi \overset{x_E}{\rightarrow} \widetilde{\pi}$ in $G_{T_P}$, then $\pi\overset{x_E}{\rightarrow}\widetilde{\pi}$ and $\widehat{\pi}\overset{x_E}{\rightarrow} \widehat{\widetilde{\pi}}$ in $G_{T_{P'}}$.
\item \label{three} If $E \cap C_v = \{a\}$ and $\pi \overset{x_E}{\rightarrow} \widetilde{\pi}$ in $G_{T_P}$, then $\pi\overset{x_{E}}{\rightarrow}\widehat{\widetilde{\pi}}$ and $\widehat{\pi}\overset{x_{(E \setminus \{a\}) \cup \{b\}}}{\longrightarrow}\widetilde{\pi}$ in $G_{T_{P'}}$.
\item \label{four} If $E \cap C_v = \{b\}$ and $\pi \overset{x_E}{\rightarrow} \widetilde{\pi}$ in $G_{T_P}$, then $\pi\overset{x_E}{\rightarrow}\widetilde{\pi}$ and $\widehat{\pi}\overset{x_{(E \setminus \{b\}) \cup \{a\}}}{\longrightarrow}\widehat{\widetilde{\pi}}$ in $G_{T_{P'}}$.
\end{enumerate}
\end{lem}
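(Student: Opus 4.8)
The plan is to reduce everything to a local computation at the vertex $v$ that is the common parent of $a$ and $b$. Because $T_P$ and $T_{P'}$ share all data except the leaf poset at $v$, because $\widehat{\partial}_E$ acts componentwise as $(\widehat{\partial}_{C_u^E}\pi_u)_{u\in I}$, and because the involution $\pi\mapsto\widehat{\pi}$ changes only the $v$-component, each of the four assertions is equivalent to the corresponding assertion for the extended promotion operators $\widehat{\partial}_{E\cap C_v}$ acting on $\mathcal{L}(P)\subseteq\mathcal{L}(P')$, where $P=Q'\oplus a\oplus b\oplus Q''+P_2$ and $P'=Q'\oplus(a+b)\oplus Q''+P_2$ are the $v$-posets. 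Writing $\sigma=\pi_v$ and letting $\widetilde{\sigma}$ be the $v$-component of the $T_P$-image $\widetilde{\pi}$, and noting that $E\in\mathcal{A}(L)$ forces $|E\cap C_v|\le 1$, the hypotheses $k\prec a$; ($E\cap C_v=\emptyset$ or $k\not\preceq a,b$); $k=a$; $k=b$ are exhaustive. I will use that $\partial_j$ is carried out by the left-to-right sweep $\tau_j,\tau_{j+1},\dots,\tau_{n-1}$ in which a single active element occupies the position currently being examined: it is swapped one step right when incomparable to its right neighbor, and otherwise settles permanently while that (necessarily larger) neighbor becomes the new active element. The decisive observation is that $a$ and $b$ are ever directly compared only when one of them is active with the other immediately to its right, and this comparison is the sole place where the runs in $P$ and in $P'$ can diverge.

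The first step records a symmetry: the transposition $g=(a\,b)$ is a poset automorphism of $P'$, since $a$ and $b$ both lie above all of $Q'$ and below all of $Q''$, and are incomparable to $P_2$ and to each other. As each $\tau_i$ depends only on comparabilities, $\tau_i(g\sigma)=g(\tau_i\sigma)$, hence $\partial_j(g\sigma)=g(\partial_j\sigma)$ for every fixed $j$; combining this with $(g\sigma)^{-1}(k)=\sigma^{-1}(gk)$ yields $\widehat{\partial}_k^{P'}\widehat{\sigma}=\widehat{\widehat{\partial}_{gk}^{P'}\sigma}$, where $\widehat{(\cdot)}$ denotes the involution interchanging $a$ and $b$ and the superscript $P'$ indicates that promotion is computed in $P'$. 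Since $\widehat{(\cdot)}$ is an involution, this identity produces every ``$\widehat{\pi}$'' assertion of the lemma from the corresponding ``$\pi$'' assertion: when $k\ne a,b$ it gives $\widehat{\partial}_k^{P'}\widehat{\sigma}=\widehat{\widehat{\partial}_k^{P'}\sigma}$ (the case $E\cap C_v=\emptyset$ being trivial since the $v$-component is then unchanged), and when $k\in\{a,b\}$ it interchanges the promoted label exactly as in parts (3) and (4). Thus it suffices to prove the four statements about the action on $\sigma$ itself.

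The second, and central, step compares the two sweeps $\widehat{\partial}_k^{P}\sigma$ and $\widehat{\partial}_k^{P'}\sigma$. They evolve identically until the first instant at which $a$ is active with $b$ as its right neighbor, because up to that point every comparison has the same outcome in $P$ and $P'$. In cases (2) and (4) I will show this instant never occurs, so the two sweeps coincide and $\widehat{\partial}_k^{P'}\sigma=\widetilde{\sigma}$: if $E\cap C_v=\emptyset$ the $v$-component is fixed; if $k\in Q''$ the sweep begins to the right of $a$ and $b$ and never reaches them; if $k\in P_2$ the active element stays in $P_2$ throughout, since a $P_2$ element is comparable only to other $P_2$ elements, so neither $a$ nor $b$ is ever active; and if $k=b$ the element $a$ always lies to the left of the sweep and is never examined. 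In cases (1) and (3) the instant does occur: for $k=a$ the element $a$ is active from the outset and its first comparable right neighbor is $b$; for $k\prec a$ the active element climbs through $Q'$ until, reaching an element maximal in $Q'$, it is blocked by $a$ (the first of $\{a,b\}\cup Q''$ occurring in $\sigma$), so $a$ becomes active before $b$ has been touched. At that step $P$ makes $a$ settle whereas $P'$ swaps $a$ past $b$; from then on the $P$-run (with $b$ active) and the $P'$-run (with $a$ active) differ only by the transposition $(a\,b)$, and since $a$ and $b$ compare identically with every remaining element of $Q''\cup P_2$ the two runs stay mirror images to the end. This gives $\widehat{\partial}_k^{P'}\sigma=\widehat{\widetilde{\sigma}}$, completing cases (1) and (3).

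I expect the main difficulty to be the bookkeeping in cases (1) and (3): after the single divergent swap one must verify that the $P$-run and the $P'$-run remain exact images of each other under $(a\,b)$ all the way to termination --- that $b$ in the $P$-run and $a$ in the $P'$-run settle at the same position, that no second $a$--$b$ comparison is ever triggered, and that the interleaved $P_2$ elements are distributed identically up to the relabeling. This is precisely where the automorphism symmetry of $P'$ and the chain structure $Q'\oplus a\oplus b\oplus Q''$ enter decisively; once it is established, the remaining cases reduce to the routine observation that the $a$--$b$ comparison is never activated.
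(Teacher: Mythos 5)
Your proof is correct. The first step---observing that $\pi_w=\widehat{\pi}_w$ for all $w\neq v$ and that $\widehat{\partial}_E$ acts componentwise, so that all four rules reduce to statements about $\widehat{\partial}_{E\cap C_v}$ acting on the single component $\pi_v\in\mathcal{L}(P)\subseteq\mathcal{L}(P')$---is exactly the reduction the paper makes; but where the paper then simply cites Lemma~14 of~\cite{poznanovic2017properties} for the one-poset version of the four rules, you supply a self-contained proof of that local statement. Your two ingredients are sound: the transposition $(a\,b)$ is an automorphism of $P'$, so the identity $\widehat{\partial}_k^{P'}\widehat{\sigma}=\widehat{\widehat{\partial}_{gk}^{P'}\sigma}$ correctly derives every $\widehat{\pi}$-arrow from the corresponding $\pi$-arrow; and the single-divergence analysis of the bumping path is right, since the $P$- and $P'$-sweeps can only differ at a step where $\tau_i$ compares $a$ with $b$, the left element of each comparison is the active element, and in a linear extension of $P$ the element $a$ precedes $b$, so such a comparison forces $a$ to be active with $b$ adjacent on its right---which, as you argue, never happens in cases (2) and (4) and happens exactly once in cases (1) and (3). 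The bookkeeping you defer is indeed routine given your observations: after the divergence everything to the right of the active position lies in $P_2\cup Q''$ (all of $Q'$ sits to the left of $a$'s original position and only moves left), $a$ and $b$ compare identically with $P_2\cup Q''$ in both posets, and the settled copy of $a$ (resp.\ $b$) is never revisited, so no second $a$--$b$ comparison can occur and the two runs remain exchanged by $(a\,b)$ to termination. The net effect is that your argument makes the lemma independent of the external reference, at the cost of carrying out the sweep analysis that the cited Lemma~14 encapsulates.
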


\begin{proof}
Notice that if $a,b \in C_v$, then $\pi_w = \hat{\pi}_w$ for all vertices $w\neq v$. Thus if $\pi \overset{x_{E}}{\rightarrow} \widetilde{\pi}$ in $G_{T_P}$, then $\pi_w \overset{x_{E}}{\rightarrow} \widetilde{\pi}_w$ in $G_{T_{P'}}$. So the only component we need to consider is $\pi_v$. Since $E \in \mathcal{A}(L)$ contains at most one element from $C_v$, the rules follow  from Lemma~14 in~\cite{poznanovic2017properties}.

\end{proof}

For the size-$m$ transition matrix $M^{T_P}$, we denote by $\partial_{a,b} M^{T_P}$ the $2m \times 2m$ matrix obtained by replacing each entry of $M^{T_P}$ by a $2 \times 2$ block using the linear extension of the map:
\begin{align*}
    \begin{blockarray}{cc}
& \widetilde{\pi} \\
\begin{block}{c(c)}
\pi & x_{E} \\
\end{block}
\end{blockarray}
\longmapsto
\begin{blockarray}{ccc}
& \widetilde{\pi} & \widehat{\widetilde{\pi}} \\
\begin{block}{c(cc)}
\pi &  & x_{E}\\
\widehat{\pi} & x_{E} &  \\
\end{block}
\end{blockarray}
\qquad &\text{ if } E \cap C_v = \{k\}, k \prec a  \\
       \begin{blockarray}{cc}
& \widetilde{\pi} \\
\begin{block}{c(c)}
\pi & x_{E} \\
\end{block}
\end{blockarray}
\longmapsto
\begin{blockarray}{ccc}
& \widetilde{\pi} & \widehat{\widetilde{\pi}} \\
\begin{block}{c(cc)}
\pi &   x_{E} & \\
\widehat{\pi} & &  x_{E}   \\
\end{block}
\end{blockarray}
\qquad &\text{ if } E \cap C_v = \emptyset \; \text{ or } \; E \cap C_v = \{k\}, k \not\preceq a, k \not\preceq b  \\
       \begin{blockarray}{cc}
& \widetilde{\pi} \\
\begin{block}{c(c)}
\pi & x_{E} \\
\end{block}
\end{blockarray}
\longmapsto
\begin{blockarray}{ccc}
& \widetilde{\pi} & \widehat{\widetilde{\pi}} \\
\begin{block}{c(cc)}
\pi &  & x_{E}\\
\widehat{\pi} & x_{(E \setminus \{a\}) \cup \{b\}}&  \\
\end{block}
\end{blockarray}
\qquad &\text{ if } E \cap C_v = \{a\} \\
     \begin{blockarray}{cc}
& \widetilde{\pi} \\
\begin{block}{c(c)}
\pi & x_{E} \\
\end{block}
\end{blockarray}
\longmapsto
\begin{blockarray}{ccc}
& \widetilde{\pi} & \widehat{\widetilde{\pi}} \\
\begin{block}{c(cc)}
\pi &   x_{E} & \\
\widehat{\pi} & & x_{(E \setminus \{b\}) \cup \{a\}}   \\
\end{block}
\end{blockarray}
\qquad &\text{ if } E \cap C_v = \{b\}
\end{align*}

%
%
%
%
%
%
%

\begin{cor} \label{formcor2} Let $T_P$ be a rooted tree with leaf poset $P = Q' \oplus a\oplus b \oplus Q''+P_{2}$ and $T_{P'}
$ be the tree with leaf poset $P' = P\setminus \{(a, b)\}$. Then $M^{T_{P'}} = \partial_{a,b} M^{T_P}$.
\end{cor}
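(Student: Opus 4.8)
The plan is to show that the $2m \times 2m$ matrix $\partial_{a,b}M^{T_P}$ obtained by the block-substitution rule is literally the transition matrix $M^{T_{P'}}$, by checking that the two matrices have the same entries when indexed by the state space $\mathcal{L}(T_{P'})$. The starting observation is that the excerpt already establishes (just before the statement) that $\mathcal{L}(T_{P'}) = \{\pi, \hat\pi : \pi \in \mathcal{L}(T_P)\}$, where $\hat\pi$ interchanges $a$ and $b$. So I would fix the ordering on $\mathcal{L}(T_{P'})$ in which each $\pi \in \mathcal{L}(T_P)$ is immediately followed by its companion $\hat\pi$; this is precisely the ``linear extension of the map'' referenced in the definition of $\partial_{a,b}$, and it makes the $2\times 2$ blocks of $\partial_{a,b}M^{T_P}$ sit exactly where the four transitions among $\{\pi, \hat\pi, \tilde\pi, \widehat{\tilde\pi}\}$ live in $M^{T_{P'}}$.

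\textbf{Key steps in order.} First I would recall that an entry $M^{T_{P'}}(\sigma, \sigma') = \sum_{E:\ \widehat\partial_E \sigma = \sigma'} x_E$, so it suffices to account for every edge $\sigma \overset{x_E}{\to}\sigma'$ in the directed graph $G_{T_{P'}}$ and verify it is produced by exactly one block of $\partial_{a,b}M^{T_P}$. Second, I would organize the verification by cases on $E \cap C_v$, matching each of the four cases in the block-substitution definition against the four parts of Lemma~\ref{form2}. Concretely, a generic edge $\pi \overset{x_E}{\to}\tilde\pi$ of $G_{T_P}$ contributes the summand $x_E$ to the entry $M^{T_P}(\pi,\tilde\pi)$, and Lemma~\ref{form2} tells me exactly which edges this becomes in $G_{T_{P'}}$: in case~\ref{one} the edges $\pi \to \widehat{\tilde\pi}$ and $\hat\pi \to \tilde\pi$ (the anti-diagonal block), in case~\ref{two} the edges $\pi\to\tilde\pi$ and $\hat\pi\to\widehat{\tilde\pi}$ (the diagonal block), in case~\ref{three} the edges $\pi\to\widehat{\tilde\pi}$ with label $x_E$ and $\hat\pi\to\tilde\pi$ with the relabeled $x_{(E\setminus\{a\})\cup\{b\}}$, and in case~\ref{four} symmetrically. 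In each case the two off-diagonal or diagonal positions, together with their labels, coincide entry-for-entry with the prescribed $2\times 2$ block. Third, I would check completeness: since $E \in \mathcal{A}(L)$ meets $C_v$ in at most one element, these four cases partition all possible $E$, so every edge of $G_{T_{P'}}$ is accounted for exactly once and no spurious entries are introduced.

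\textbf{The main obstacle} is the bookkeeping in cases~\ref{three} and~\ref{four}, where the label changes from $x_E$ to $x_{(E\setminus\{a\})\cup\{b\}}$ or vice versa. I must be careful that summing over all $E$ with $E\cap C_v = \{a\}$ in $M^{T_P}$ and relabeling indeed reproduces, with correct multiplicity, all edges in $G_{T_{P'}}$ whose promotion move involves $b$ (resp.\ $a$) at vertex $v$; this is where one could accidentally double-count or miss an edge if the bijection $E \mapsto (E\setminus\{a\})\cup\{b\}$ between the relevant index sets is not handled cleanly. Once that relabeling bijection is verified, the identification is immediate: every entry of $M^{T_{P'}}$ equals the corresponding entry of $\partial_{a,b}M^{T_P}$, so $M^{T_{P'}} = \partial_{a,b}M^{T_P}$, which is the claim. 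Since the four cases are exactly the content of Lemma~\ref{form2}, the corollary follows directly once the indexing convention on $\mathcal{L}(T_{P'})$ is pinned down and the case analysis is assembled.
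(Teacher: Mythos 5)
Your proposal is correct and follows exactly the route the paper intends: the corollary is stated without proof precisely because it is the entry-by-entry translation of the four cases of Lemma~\ref{form2} into the four blocks of the definition of $\partial_{a,b}$, which is what you carry out. Your added care about the indexing convention on $\mathcal{L}(T_{P'})$ and the relabeling bijection $E \mapsto (E\setminus\{a\})\cup\{b\}$ only makes explicit what the paper leaves implicit.
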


For a complex matrix $S$, define $\partial S = S \otimes I_{2}$. So, if  $\mathcal{E}$ is an elementary matrix of size $m$ corresponding to a row operation $R$, then $\partial \mathcal{E}$ describes performing a corresponding operation to 2 rows on a matrix of size $2m$. Note that the remaining proofs in this section follow exactly from the analogous proofs in~\cite{poznanovic2017properties}.

\begin{lem}\label{diagramlemma2}
Let $S$ be a matrix with complex entries. and $M$ a matrix whose entries are homogeneous degree-1 polynomials in the $x_E$'s where $E \in \mathcal{A}(L)$.  Then \[(\partial S) (\partial_{a,b} M) =\partial_{a,b} (SM) \; \; \text{     and     } \; \; (\partial_{a,b} M) (\partial S) =\partial_{a,b} (MS) .\]
\end{lem}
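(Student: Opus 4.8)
The plan is to recast both operators $\partial$ and $\partial_{a,b}$ in the language of Kronecker products, after which the two identities become instances of the mixed-product property $(A \otimes B)(C \otimes D) = (AC) \otimes (BD)$. Since the entries of $M$ are homogeneous of degree one in the $x_E$'s, I would first write $M = \sum_{E \in \mathcal{A}(L)} x_E\, C_E$, where each $C_E$ is a constant complex $m \times m$ matrix (the coefficient matrix of $x_E$). The defining phrase ``linear extension of the map'' means precisely that $\partial_{a,b}$ acts $\mathbb{C}$-linearly on these coefficient matrices: for each $E$ the four cases of the definition assign to the symbol $x_E$ a fixed $2 \times 2$ block $\beta_E$ (equal to $x_E I_2$, to $x_E\bigl(\begin{smallmatrix}0&1\\1&0\end{smallmatrix}\bigr)$, or to an antidiagonal/diagonal block carrying the relabeled variables $x_{(E\setminus\{a\})\cup\{b\}}$ or $x_{(E\setminus\{b\})\cup\{a\}}$), and this block depends only on $E \cap C_v$, not on the position $(\pi,\tilde\pi)$ of the entry.

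The crux of the argument — and the step I expect to require the most care — is to verify this position-independence and thereby establish the factorization
\[
\partial_{a,b} M \;=\; \sum_{E \in \mathcal{A}(L)} C_E \otimes \beta_E ,
\]
using the ordering convention in which the outer index ranges over $\mathcal{L}(T_P)$ and the inner index distinguishes $\pi$ from $\widehat{\pi}$. Indeed, the $(\pi,\tilde\pi)$-block of the right-hand side is $\sum_E (C_E)_{\pi\tilde\pi}\,\beta_E$, which is exactly the block the case analysis in the definition assigns to the entry $M_{\pi\tilde\pi} = \sum_E (C_E)_{\pi\tilde\pi}\, x_E$. The only subtlety is the relabelling in cases (3) and (4): I would check that the substitution $E \mapsto (E\setminus\{a\})\cup\{b\}$ (resp. its reverse) is a well-defined function of $E$ alone, so that $\beta_E$ really is a fixed matrix over the polynomial ring $\mathbb{C}[x_F : F \in \mathcal{A}(L)]$. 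Simultaneously, $\partial$ is by definition the Kronecker product with the identity, $\partial S = S \otimes I_2$.

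With the factorization in hand, both identities are immediate. For the first, the mixed-product property (valid over the commutative ring $\mathbb{C}[x_F]$, and here especially clean because $B = I_2$ is central) gives
\[
(\partial S)(\partial_{a,b} M) = (S \otimes I_2)\sum_E C_E \otimes \beta_E = \sum_E (S C_E) \otimes \beta_E .
\]
On the other hand $SM = \sum_E x_E (S C_E)$, whose $\partial_{a,b}$-image is $\sum_E (S C_E) \otimes \beta_E$ by the same factorization applied to $SM$; hence $(\partial S)(\partial_{a,b} M) = \partial_{a,b}(SM)$. The second identity follows in the same way by multiplying on the right, using $(C_E \otimes \beta_E)(S \otimes I_2) = (C_E S)\otimes \beta_E$ together with $MS = \sum_E x_E (C_E S)$. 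As a sanity check independent of the Kronecker formalism, one can instead compute the $(\pi,\tilde\pi)$-block of each side directly: since the $(\pi,\sigma)$-block of $\partial S$ is the scalar matrix $S_{\pi\sigma} I_2$, left-multiplication by $\partial S$ simply scales the $\sigma$-block of $\partial_{a,b} M$ by $S_{\pi\sigma}$ and sums over $\sigma$, which is exactly the block $\partial_{a,b}$ assigns to the $(\pi,\tilde\pi)$-entry $\sum_\sigma S_{\pi\sigma} M_{\sigma\tilde\pi}$ of $SM$; the right-multiplication case is symmetric.
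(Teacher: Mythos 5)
Your proposal is correct and follows essentially the same route as the paper: the paper likewise rewrites $\partial_{a,b}M$ as a sum of Kronecker products (grouping the terms by the four cases of $E\cap C_v$, with the left factors being $M$ with all but the relevant variables set to zero — i.e., your $x_E C_E$ — and the right factors being the same fixed $2\times 2$ blocks $\beta_E$), and then concludes by noting that multiplication by a constant matrix $S$ commutes with this decomposition. Your coefficient-matrix formulation $\partial_{a,b}M=\sum_E C_E\otimes\beta_E$ and the mixed-product step are just a slightly more explicit phrasing of the identical argument.
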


\begin{proof}
For easier notation, let \begin{align*} \mathcal{A}(L)^{I} &= \{E \in \mathcal{A}(L) : E \cap C_v = \{k\}, k \prec a\}, \\ \mathcal{A}(L)^{II} &= \{E \in \mathcal{A}(L) : E \cap C_v = \emptyset \; \text{ or } \; E \cap C_v = \{k\}, k \not\preceq a, k \not\preceq b\}, \\ \mathcal{A}(L)^{III} &= \{E \in \mathcal{A}(L) : E \cap C_v = \{a\}\}, \\ \mathcal{A}(L)^{IV} &= \{E \in \mathcal{A}(L) : E \cap C_v = \{b\}\}. \end{align*} 

We can notice that the definition of $\partial_{a,b} M$ can be restated as 

\begin{align*} \partial_{a,b} M & = M\bigg \rvert_{\substack{x_{E}=0 \\ E \notin \mathcal{A}(L)^{I}} } \otimes \scalebox{1}{$\begin{pmatrix} 0 & 1 \\ 1 & 0\end{pmatrix}$} + M\bigg \rvert_{\substack{x_{E}=0 \\ E \notin \mathcal{A}(L)^{II}} } \otimes I_{2} +  \displaystyle \sum_{E \in \mathcal{A}(L)^{III}} \left[ \frac{1}{ x_{E}} M\bigg \rvert_{\substack{x_{E'}=0 \\ E' \neq E} } \otimes\scalebox{1}{$ \begin{pmatrix} 0 & x_E \\ x_{(E \setminus \{a\}) \cup \{b\}} & 0 \end{pmatrix}$} \right] \\ & \qquad \qquad  + \displaystyle \sum_{E \in \mathcal{A}(L)^{IV}} \left[ \frac{1}{ x_E} M \bigg \rvert_{\substack{x_{E'}=0 \\ E' \neq E} } \otimes   \scalebox{1}{$\begin{pmatrix} x_E & 0 \\ 0 &  x_{(E\setminus \{b\}) \cup \{a\}} \end{pmatrix}$}\right].\end{align*}

The claim follows since for a complex matrix $S$ independent of the $x_{E}$'s, \[SM\bigg \rvert_{\substack{x_{E}=0 \\ E \notin \mathcal{A}(L)^{I}}}  = (SM) \bigg \rvert_{\substack{x_{E}=0 \\ E \notin \mathcal{A}(L)^{I}}},\] etc. 
\end{proof}


\begin{lem} \label{jordan2} Let $M$ be a matrix whose entries are homogeneous degree-1 polynomials in the $x_E$'s and let $S$ be a complex matrix such that $U = SMS^{-1}$ is upper triangular. Then the eigenvalues of $\partial_{a,b}M$ are the same as the eigenvalues of $\partial_{a,b}U$.
\end{lem}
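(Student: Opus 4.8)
The plan is to exhibit an explicit similarity transformation carrying $\partial_{a,b}M$ to $\partial_{a,b}U$, so that the two matrices, being conjugate, share the same spectrum. The natural candidate for the conjugating matrix is $\partial S = S \otimes I_2$, which is the ``doubled'' version of the matrix $S$ that upper-triangularizes $M$. First I would observe that since $S$ is invertible, so is $\partial S$, and moreover $(\partial S)^{-1} = \partial(S^{-1}) = S^{-1} \otimes I_2$; this is just the standard identity $(A \otimes B)^{-1} = A^{-1} \otimes B^{-1}$ applied with $B = I_2$.

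The heart of the argument is to show that
\[
(\partial S)\,(\partial_{a,b}M)\,(\partial S)^{-1} = \partial_{a,b}U,
\]
and this follows from applying Lemma~\ref{diagramlemma2} twice. Using the first identity in that lemma, left multiplication gives $(\partial S)(\partial_{a,b}M) = \partial_{a,b}(SM)$. Then, using the second identity in the lemma with the matrix $SM$ in place of $M$ and the complex matrix $S^{-1}$, right multiplication by $(\partial S)^{-1} = \partial(S^{-1})$ gives
\[
\bigl(\partial_{a,b}(SM)\bigr)\,\bigl(\partial(S^{-1})\bigr) = \partial_{a,b}\bigl((SM)S^{-1}\bigr) = \partial_{a,b}\bigl(SMS^{-1}\bigr) = \partial_{a,b}U.
\]
Combining the two steps yields the desired conjugation, and since conjugate matrices have identical characteristic polynomials, $\partial_{a,b}M$ and $\partial_{a,b}U$ have the same eigenvalues.

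The one point requiring care is the hypothesis of Lemma~\ref{diagramlemma2}: it applies only to matrices whose entries are homogeneous degree-$1$ polynomials in the $x_E$'s. I would verify that this structure is preserved at each stage. Since $S$ and $S^{-1}$ are complex matrices independent of the $x_E$'s, the product $SM$ is again a matrix of homogeneous degree-$1$ entries, so Lemma~\ref{diagramlemma2} may legitimately be invoked the second time; likewise $U = SMS^{-1}$ is homogeneous of degree $1$, so $\partial_{a,b}U$ is well defined. Once this bookkeeping is confirmed, the proof is essentially a two-line computation, and I do not anticipate any genuine obstacle beyond checking that the degree-$1$ condition persists under multiplication by the constant matrices $S$ and $S^{-1}$.
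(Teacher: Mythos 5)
Your proof is correct and follows exactly the route the paper intends: the paper simply states that the lemma is a direct consequence of Lemma~\ref{diagramlemma2}, and your argument spells out that intended deduction, conjugating $\partial_{a,b}M$ by $\partial S$ and applying the two identities of Lemma~\ref{diagramlemma2} to obtain $\partial_{a,b}U$. The extra bookkeeping about the degree-$1$ hypothesis being preserved under multiplication by constant matrices is a welcome clarification but does not change the argument.
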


\begin{proof} This is a direct consequence of Lemma~\ref{diagramlemma2}.
\end{proof}

\begin{proofof}{Theorem \ref{generalfactor}} 
By Corollary~\ref{formcor2}, $M^{T_{P'}} = \partial_{a,b}M^{T_P}$. By assumption, the matrix $U = SM^{T_{P}}S^{-1}$ is an upper triangular matrix whose diagonal entries are the eigenvalues $x^{\mathfrak{s}}$ of $M^{T_P}$. By Lemma~\ref{jordan2} the eigenvalues of $M^{T_{P'}}$ are the same as the eigenvalues of $\partial_{a,b}U$ which is block upper triangular with $2 \times 2$ blocks $\partial_{a,b}x^{\mathfrak{s}}$ on the main diagonal, which, using the notation from above, are

\[ \partial_{a,b}x^{\mathfrak{s}} = \scalebox{.7}{$\begin{pmatrix*}[c]  \displaystyle \sum_{E \in \mathcal{A}(L)^{II}} c_E^{\mathfrak{s}}x_E +  \sum_{E \in \mathcal{A}(L)^{IV}} c_E^{\mathfrak{s}}x_E & \displaystyle \sum_{E \in \mathcal{A}(L)^{I}} c_E^{\mathfrak{s}}x_E +  \sum_{E \in \mathcal{A}(L)^{III}} c_E^{\mathfrak{s}}x_E\\  \displaystyle \sum_{E \in \mathcal{A}(L)^{I}} c_E^{\mathfrak{s}}x_E + \sum_{E \in \mathcal{A}(L)^{III}} c_{E}^{\mathfrak{s}}x_{(E \setminus \{a\}) \cup \{b\}}  & \displaystyle \sum_{E \in \mathcal{A}(L)^{II}} c_E^{\mathfrak{s}}x_E + \sum_{E \in \mathcal{A}(L)^{IV}} c_{E}^{\mathfrak{s}}x_{(E \setminus \{b\}) \cup \{a\}} \end{pmatrix*}$}.\]

%

If $x^{\mathfrak{s}}, a, b$ satisfy property (A) then \begin{align*} \sum_{E \in \mathcal{A}(L)^{III}} c_E^{\mathfrak{s}}x_E & = \sum_{E \in \mathcal{A}(L)^{IV}} c_{E}^{\mathfrak{s}}x_{(E \setminus \{b\}) \cup \{a\}}  \\  
\sum_{E \in \mathcal{A}(L)^{IV}} c_E^{\mathfrak{s}}x_E &= \sum_{E \in \mathcal{A}(L)^{III}} c_{E}^{\mathfrak{s}}x_{(E \setminus \{a\}) \cup \{b\}}    \end{align*} and so,
\[ \scalebox{1}{$ \begin{pmatrix*}[r] 1 & 0 \\ -1 & 1   \end{pmatrix*} $}\partial_{a,b}x^{\mathfrak{s}} \scalebox{1}{$ \begin{pmatrix*}[r] 1 & 0 \\ -1 & 1   \end{pmatrix*}^{-1} $}=  \scalebox{.8}{$\begin{pmatrix*}[c] x^\mathfrak{s}  & \displaystyle \sum_{E \in \mathcal{A}(L)^{I}} c_E^{\mathfrak{s}}x_E +  \sum_{E \in \mathcal{A}(L)^{III}} c_E^{\mathfrak{s}}x_E  \\ 0 &  \displaystyle \sum_{E \in \mathcal{A}(L)^{II}} c_E^{\mathfrak{s}}x_E - \sum_{E \in \mathcal{A}(L)^{I}} c_E^{\mathfrak{s}}x_E \end{pmatrix*}$}.\]

If $x^{\mathfrak{s}}, a, b$ satisfy property (B) then 
\[ \sum_{E \in \mathcal{A}(L)^{I}} c_E^{\mathfrak{s}}x_E = \sum_{E \in \mathcal{A}(L)^{III}} c_E^{\mathfrak{s}}x_E = \sum_{E \in \mathcal{A}(L)^{III}} c_{E}^{\mathfrak{s}}x_{(E \setminus \{a\}) \cup \{b\}} = 0 \] and hence $\partial_{a,b}x^{\mathfrak{s}} $ is diagonal:

\[\partial_{a,b}x^{\mathfrak{s}} =\scalebox{.7}{ $\begin{pmatrix*}[c]   x^{\mathfrak{s}}  & 0 \\  0  &   \displaystyle \sum_{E \in \mathcal{A}(L)^{II}} c_E^{\mathfrak{s}}x_E + \sum_{E \in \mathcal{A}(L)^{IV}} c_{E}^{\mathfrak{s}}x_{(E \setminus \{b\}) \cup \{a\}} \end{pmatrix*}$}.\]

%
%
%
%
%
This also shows that there is a real matrix $S'$ such that $S' (\partial_{a,b} U) (S')^{-1}$ is upper triangular. Consequently, $S'(\partial S) M^{T_{P'}} (S'(\partial S))^{-1}$ is upper triangular. 

Finally, notice that $R_{P'} \subset R_{P}$ and if $(a',b') \in R_{P'}$ then $\{a',b'\} \cup \{a,b\} =\emptyset$ and either $a', b' \prec a$ or $a', b' \npreceq b$. So, by inspection, the eigenvalues of $M^{T_{P'}}$ together when any pair from $R_{P'}$ satisfy one of the conditions $(A)$ and $(B)$ from the definition of the upset property.

\end{proofof}

 \bibliographystyle{plain}

\begin{thebibliography}{10}

\bibitem{ayyer2014combinatorial}
Arvind Ayyer, Steven Klee, and Anne Schilling.
\newblock Combinatorial {M}arkov chains on linear extensions.
\newblock {\em Journal of Algebraic Combinatorics}, 39(4):853--881, 2014.

\bibitem{ayyer2015markov}
Arvind Ayyer, Anne Schilling, Benjamin Steinberg, and Nicolas~M Thi{\'e}ry.
\newblock Markov chains, $\mathcal{R}$-trivial monoids and representation
  theory.
\newblock {\em International Journal of Algebra and Computation},
  25(01n02):169--231, 2015.

\bibitem{berg2011primitive}
Chris Berg, Nantel Bergeron, Sandeep Bhargava, and Franco Saliola.
\newblock Primitive orthogonal idempotents for $\mathcal{R}$-trivial monoids.
\newblock {\em Journal of Algebra}, 348(1):446--461, 2011.

\bibitem{bidigare1999combinatorial}
Pat Bidigare, Phil Hanlon, and Dan Rockmore.
\newblock A combinatorial description of the spectrum for the {T}setlin library
  and its generalization to hyperplane arrangements.
\newblock {\em Duke Mathematical Journal}, 99(1):135--174, 1999.

\bibitem{bjorner2008random}
Anders Bj{\"o}rner.
\newblock Random walks, arrangements, cell complexes, greedoids, and
  self-organizing libraries.
\newblock In {\em Building Bridges}, pages 165--203. Springer, 2008.

\bibitem{bjorner2009note}
Anders Bj{\"o}rner.
\newblock Note: Random-to-front shuffles on trees.
\newblock {\em Electronic Communications in Probability}, 14:36--41, 2009.

\bibitem{brown2000semigroups}
Kenneth~S. Brown.
\newblock Semigroups, rings, and {M}arkov chains.
\newblock {\em Journal of Theoretical Probability}, 13(3):871--938, 2000.

\bibitem{brown1998random}
Kenneth~S Brown and Persi Diaconis.
\newblock Random walks and hyperplane arrangements.
\newblock {\em Annals of Probability}, pages 1813--1854, 1998.

\bibitem{donnelly1991heaps}
Peter Donnelly.
\newblock The heaps process, libraries, and size-biased permutations.
\newblock {\em Journal of Applied Probability}, pages 321--335, 1991.

\bibitem{green1951structure}
James~A Green.
\newblock On the structure of semigroups.
\newblock {\em Annals of Mathematics}, pages 163--172, 1951.

\bibitem{haiman1992dual}
Mark~D Haiman.
\newblock Dual equivalence with applications, including a conjecture of
  {P}roctor.
\newblock {\em Discrete Mathematics}, 99(1):79--113, 1992.

\bibitem{hendricks1972stationary}
W.J. Hendricks.
\newblock The stationary distribution of an interesting {M}arkov chain.
\newblock {\em Journal of Applied Probability}, pages 231--233, 1972.

\bibitem{hendricks1973extension}
W.J. Hendricks.
\newblock An extension of a theorem concerning an interesting {M}arkov chain.
\newblock {\em Journal of Applied Probability}, pages 886--890, 1973.

\bibitem{kapoor1991stochastic}
Sanjiv Kapoor and Edward~M. Reingold.
\newblock Stochastic rearrangement rules for self-organizing data structures.
\newblock {\em Algorithmica}, 6(1-6):278--291, 1991.

\bibitem{malvenuto1994evacuation}
Claudia Malvenuto and Christophe Reutenauer.
\newblock Evacuation of labelled graphs.
\newblock {\em Discrete Mathematics}, 132(1):137--143, 1994.

\bibitem{phatarfod1991matrix}
Ravindra~M. Phatarfod.
\newblock On the matrix occurring in a linear search problem.
\newblock {\em Journal of Applied Probability}, pages 336--346, 1991.

\bibitem{poznanovic2017properties}
Svetlana Poznanovi{\'c} and Kara Stasikelis.
\newblock Properties of the promotion {M}arkov chain on linear extensions.
\newblock {\em Journal of Algebraic Combinatorics}, pages 1--24, 2017.

\bibitem{Rhodes:2017rt}
John Rhodes and Anne Schilling.
\newblock Unified theory for finite {M}arkov chains.
\newblock {\em arXiv:1711.10689}, 2017.

\bibitem{schocker2008radical}
M.~Schocker.
\newblock Radical of weakly ordered semigroup algebras.
\newblock {\em Journal of Algebraic Combinatorics}, 28(1):231--234, Aug 2008.

\bibitem{schutzenberger1972promotion}
Marcel-Paul Sch{\"u}tzenberger.
\newblock Promotion des morphismes d'ensembles ordonn{\'e}s.
\newblock {\em Discrete Mathematics}, 2(1):73--94, 1972.

\bibitem{sta97}
Richard~P Stanley.
\newblock Enumerative combinatorics. {V}ol. 1, vol. 49 of {C}ambridge {S}tudies
  in {A}dvanced {M}athematics, 1997.

\bibitem{steinberg2006mobius}
Benjamin Steinberg.
\newblock M{\"o}bius functions and semigroup representation theory.
\newblock {\em Journal of Combinatorial Theory, Series A}, 113(5):866--881,
  2006.

\bibitem{steinberg2008mobius}
Benjamin Steinberg.
\newblock M{\"o}bius functions and semigroup representation theory {II}:
  {C}haracter formulas and multiplicities.
\newblock {\em Advances in Mathematics}, 217(4):1521--1557, 2008.

\end{thebibliography}

\end{document}